\documentclass{amsart}
\usepackage[utf8]{inputenc}
\usepackage{url,hyperref}
\usepackage{amsfonts,latexsym,amssymb,amsmath,amsthm}
\usepackage{biblatex}
\usepackage{bm}
\usepackage{graphicx} 
\usepackage{caption}
\usepackage{enumitem}  
\usepackage{rotating}
\usepackage{tikz}

\usetikzlibrary{calc}
\usetikzlibrary{scopes}
\usetikzlibrary{backgrounds}

\theoremstyle{plain}
\newtheorem{theorem}{Theorem}[section]
\newtheorem{proposition}[theorem]{Proposition}
\newtheorem{corollary}[theorem]{Corollary}
\newtheorem{lemma}[theorem]{Lemma}
\newtheorem*{conjecture*}{Conjecture}

\theoremstyle{definition}
\newtheorem{definition}[theorem]{Definition}
\newtheorem{example}[theorem]{Example}
\newtheorem{remark}[theorem]{Remark}

\numberwithin{equation}{section}

\def\N{\mathbb N}
\def\R{\mathbb R}
\def\WH{\mathbb{WH}}

\newcommand{\lset}[1]{\mathsf{#1}}

\def\des{\textup{des}}

\def\A{\mathcal{A}}
\def\B{\mathcal{B}}

\def\T{\mathcal{T}}

\def\newop#1{\expandafter\def\csname #1\endcsname{\mathop{\rm #1}\nolimits}}

\newop{lex}

\begin{document}

\title{A M\"obius function computing $h$-polynomials of nestohedra}

\begin{abstract}
We introduce a poset of weighted hypergraphs on a fixed vertex set which has a remarkable property: the M\"obius values of the intervals associated to any hypergraph give, up to sign, the coefficients of the $h$-polynomial of its nestohedron. We exploit this relation to provide two new recurrence relations to compute $h$-polynomials of nestohedra.
\end{abstract}

\author{Nicolas Avila Ramirez}

\address[N. Avila Ramirez]{D\'{e}partement de math\'{e}matiques, Universit\'e du Qu\'ebec \`a Montr\'eal, \linebreak  Montr\'eal, Canada.}
\email{avila\_ramirez.nicolas@courrier.uqam.ca}

\author{Sergio A. Carrillo}

\address[S. A. Carrillo]{Departamento de Matemáticas, Universidad Nacional de Colombia, Bogot\'{a}, Colombia.}

\email{sacarrillot@unal.edu.co}

\author[Gonz\'{a}lez D'Le\'{o}n]{Rafael S. Gonz\'alez D'Le\'on$^{*}$}
\address[R. S. Gonz\'{a}lez D'Le\'{o}n]{Department of Mathematics and Statistics, Loyola University of Chicago, Chicago, USA}
\email{rgonzalezdleon@luc.edu}
\urladdr{\url{https://dleon.combinatoria.co/}}
\thanks   {$^{*}$ Partially supported by an AMS-Simons Research Enhancement Grant for Primarily Undergraduate Institution Faculty.}

\subjclass[2020]{Primary: 06A07, 05C65, 52B05 Secondary: 52B12}

\keywords{Hypergraphs, building sets, nestohedra, $h$-polynomials,  M\"obius functions}

\maketitle

\section{Introduction}\label{section:introduction}

Building sets were introduced in the work of De Concini and Procesi on wonderful compactifications of complements of complex hyperplane arrangements \cite{DeConciniProcesi1995}. They capture the notion of connectivity present in many mathematical contexts and have been generalized further from Boolean algebras to general semilattices in the work of Feichtner and Kozlov in \cite{FeichtnerKozlov2004}.    

A \textit{building set}	on ground set $V$ is a collection $\B\subseteq  \mathcal{P}(V)$ of nonempty subsets (\emph{blocks}) of $V$ satisfying that
	\begin{enumerate}[label=(\textbf{B\arabic*})]
		\item \label{B1} if $I, J\in\B$ and $I\cap J \not = \emptyset$, then $I\cup J\in\B$.
		
		\item \label{B2} $\B$ contains the singleton $\{v\}$, for all $v\in V$.
	\end{enumerate}

    The most natural examples of building sets are perhaps the ones associated to connectivity in graphs. For a graph $G=(V,E)$ on vertex set $V$ and set of edges $E=E(G)$, the family $\B(G)$ formed by subsets $I \subseteq V$ such that the induced subgraph of $G$ on vertex set $I$ is connected, is a building set. Building sets of the form $\B(G)$ are known as \emph{graphical}. 
    
    In a greater generality, recall that a \emph{(multi) hypergraph} $H$ on $V=:V(H)$, is a collection of (possibly repeated) nonempty subsets of $V$, called \emph{hyperedges}, where any two hyperedges on the same underlying vertex set are considered to be distinct---to have different identities. For a hypergraph $H$ on $V$, the collection $\B(H)$ of subsets of $V$ that are connected by elements of $H$ (see Section \ref{section:poset_of_weighted_hypergraphs} for the definition) also forms a building set. Every building set is of the form $\B=\B(H)$ for some hypergraph $H$, but this map is not a bijection since there are infinitely many $H$ satisfing this property. In fact, we can consider building sets $\B$ as hypergraphs themselves, and in such case---after an abuse of notation---$\B=\B(\B)$. However, as we argue later, $\B$ is not the minimal hypergraph $H$ such that $\B=\B(H)$ as long as $\B\neq \emptyset$.

Starting from a building set $\B$ on $V$ we can associate a polytope which captures its combinatorial information. Define the \emph{$\B$-nestohedron} as the convex polytope $P_{\B}$ in $\mathbb{R}^{V}$ defined by the Minkowsky sum
\begin{equation*}\label{equation:minkowski_sum}
    P_{\B}:=\sum_{I \in \B}\Delta_I,
\end{equation*}
where $\Delta_I\subseteq \R^{V}$ is the convex hull of the standard basis vectors $\{\pmb{e}_v : v \in I\}$---known as a  \emph{standard simplex}. For a hypergraph $H$, we define $P_H:=P_{\B(H)}$\footnote{This polytope is combinatorially and normally equivalent to the definition of the \emph{hypergraph polytope} associated to $H$ in  \cite{DosenPetric2011}.}. Nestohedra were introduced independently by several authors, including Postnikov \cite{Postnikov2009} and Feichtner and Sturmfels \cite{FeichtnerSturmfels2005}. They are dual to the nested set complexes introduced by De Concini and Procesi in \cite{DeConciniProcesi1995} (see also \cite{Zelevinsky2006}). The particular subfamily of graphical nestohedra was studied by Carr and Devadoss under the name of graph-associahedra \cite{CarrDevadoss2006}. Figure \ref{figure:B_nestohedron} illustrates the $\B_0$-nestohedron with building set \begin{equation}\label{equation:B0}
\B_0=\{\{1\},\{2\},\{3\},\{4\},\{1,3\},\{2,3\},\{1,2,3\},\{1,2,4\},\{1,2,3,4\}\}.\end{equation} This polytope is $3$-dimensional and all its vertices lie in the hyperplane $x_1+x_2+x_3+x_4=5$.

\begin{figure}
    \centering 
    \resizebox{0.5\linewidth}{!}{\usetikzlibrary{calc}
\begin{tikzpicture}[
  x=1cm, y=1cm,
  facet/.style   = {fill=black, fill opacity=0.1, draw=none},
  visible/.style = {draw=black!88, line width=0.9pt, line cap=round},
  hidden/.style  = {draw=black!55, line width=0.6pt, dash pattern=on 2.4pt off 2.2pt, line cap=round},
  node4/.style   = {circle, fill=orange!75!red, draw=white, line width=0.4pt, inner sep=0pt, minimum size=3.8pt},
  lbl/.style     = {font=\tiny\ttfamily, text=black!70, inner sep=1.6pt}
]

% ---- projected vertices -------------------------------------------------
\coordinate (v1) at (1.7097, -0.2273); % (0,0,3,2)
\coordinate (v2) at (2.4402, -2.6686); % (0,0,4,1)
\coordinate (v3) at (0.5938, 1.6141); % (4,1,0,0)
\coordinate (v4) at (0.8927, -4.0523); % (0,1,4,0)
\coordinate (v5) at (-2.6282, 1.3046); % (0,2,1,2)
\coordinate (v6) at (2.2413, 1.0794); % (4,0,1,0)
\coordinate (v7) at (-0.5116, 3.2122); % (2,1,0,2)
\coordinate (v8) at (1.3986, 2.6400); % (2,0,1,2)
\coordinate (v9) at (-4.5271, -0.1664); % (1,4,0,0)
\coordinate (v10) at (-4.8526, -1.5593); % (0,4,1,0)
\coordinate (v11) at (-2.4432, 2.6198); % (1,2,0,2)
\coordinate (v12) at (2.8644, -3.1244); % (1,0,4,0)

% ---- front-facing facets, translucent so every edge stays legible ----------------------------------------------
\fill[fill=black, fill opacity=0.1, draw=none] (v4) -- (v2) -- (v1) -- (v5) -- (v10) -- cycle; % 5-gon
\fill[fill=black, fill opacity=0.1, draw=none] (v6) -- (v8) -- (v1) -- (v2) -- (v12) -- cycle; % 5-gon
\fill[fill=black, fill opacity=0.1, draw=none] (v11) -- (v5) -- (v1) -- (v8) -- (v7) -- cycle; % 5-gon
\fill[fill=black, fill opacity=0.1, draw=none] (v12) -- (v2) -- (v4) -- cycle; % 3-gon
\fill[fill=black, fill opacity=0.1, draw=none] (v10) -- (v5) -- (v11) -- (v9) -- cycle; % 4-gon

% ---- hidden edges, dashed through the translucent facets ----------------------------------------
\draw[hidden] (v3) -- (v9);
\draw[hidden] (v3) -- (v6);
\draw[hidden] (v3) -- (v7);

% ---- visible edges -----------------------------------------------------
\draw[visible] (v2) -- (v4);
\draw[visible] (v1) -- (v2);
\draw[visible] (v1) -- (v5);
\draw[visible] (v5) -- (v10);
\draw[visible] (v4) -- (v10);
\draw[visible] (v6) -- (v8);
\draw[visible] (v1) -- (v8);
\draw[visible] (v2) -- (v12);
\draw[visible] (v6) -- (v12);
\draw[visible] (v5) -- (v11);
\draw[visible] (v7) -- (v8);
\draw[visible] (v7) -- (v11);
\draw[visible] (v4) -- (v12);
\draw[visible] (v9) -- (v10);
\draw[visible] (v9) -- (v11);

% ---- vertices and labels ----------------------------------------------
\node[node4] at (v1) {};
\node[lbl, below right] at (1.7097, -0.2273 + 0.5) {\Large $(0,0,3,2)$};
\node[node4] at (v2) {};
\node[lbl, below right] at (2.4402 + 0.1, -2.6686 + 0.5) {\Large $(0,0,4,1)$};
\node[node4] at (v3) {};
\node[lbl, above right] at (v3) {\Large $(4,1,0,0)$};
\node[node4] at (v4) {};
\node[lbl, below right] at (v4) {\Large $(0,1,4,0)$};
\node[node4] at (v5) {};
\node[lbl, above left] at (v5) {\Large $(0,2,1,2)$};
\node[node4] at (v6) {};
\node[lbl, above right] at (v6) {\Large $(4,0,1,0)$};
\node[node4] at (v7) {};
\node[lbl, above left] at (v7) {\Large $(2,1,0,2)$};
\node[node4] at (v8) {};
\node[lbl, above right] at (v8) {\Large $(2,0,1,2)$};
\node[node4] at (v9) {};
\node[lbl, below left] at (v9) {\Large $(1,4,0,0)$};
\node[node4] at (v10) {};
\node[lbl, below left] at (v10) {\Large $(0,4,1,0)$};
\node[node4] at (v11) {};
\node[lbl, above left] at (v11) {\Large $(1,2,0,2)$};
\node[node4] at (v12) {};
\node[lbl, below right] at (v12) {\Large $(1,0,4,0)$};
\end{tikzpicture}}
    \caption{A $\B$-nestohedron }
    \label{figure:B_nestohedron}
\end{figure}

To any polytope $P$ one can define its \emph{$f$-polynomial} by
$$f_P(t):=\sum_{k\ge 0} f_k t^k,$$ where $f_i=f_i(P)$ is the number of $i$-dimensional faces of $P$.  A closely related polynomial is its  \emph{$h$-polynomial} defined using the translation $$h_P(t):=f_P(t-1).$$
We denote by $h_{H}(t):=h_{P_{H}}(t)$ the \emph{$h$-polynomial} of a hypergraph $H$ and by $h_{\B}(t)$ the \emph{$h$-polynomial} of a building set $\B$. Note that by definition $$h_H(t)=h_{\B(H)}(t).$$ In our example,  Figure \ref{figure:B_nestohedron} shows that $f_{P_{\B_0}}(t)=12+18t+8t^2+t^3$, and thus $h_{\B_0}(t)=1+5t+5t^2+t^3$. In this article, we adopt the more general perspective of hypergraphs to study the $h$-polynomials of their associated nestohedra.

The polytopes $P_H$ are all \emph{simple}, and hence their polar duals are \emph{simplicial}. It follows then from a result of Bruggesser and Mani in  \cite{BruggesserMani1971} that their $h$-polynomial has nonnegative integer coefficients. The fact that they are strictly positive and unimodal is a consequence of Stanley's $g$-theorem \cite{Stanley1980}.

Postnikov, Reiner, and Williams proved in \cite{PostnikovReinerWilliams2007} that $h_{\mathcal{B}}(t)$ can be computed as the descent enumerator of a family of rooted forests associated to $\mathcal{B}$, introduced by Postnikov in \cite{Postnikov2009} under the name of $\B$-forests. This gives a combinatorial interpretation to the positive coefficients of $h_{\B}(t)$. It is also shown in \cite{PostnikovReinerWilliams2007}  that for a connected chordal building set $\B$, $h_{\B}(t)$ can be computed as the descent enumerator of a family of permutations associated to $\B$, known as $\B$-permutations.

Using the recursive nature of the family of $\B$-forests, Postnikov derived a recursive characterization of  $f$- and $h$-polynomials of nestohedra.

\begin{proposition}[$h$-version of Theorem 7.11 \cite{Postnikov2009}]	\label{proposition:recurrence_postnikov} The $h$-polynomials associated to building sets are determined by the following recurrence relations:
	\begin{enumerate}
		\item If $\B$ is the unique building set on a singleton, $h_{\B}(t)=1$.
		
		\item If $\B$ has connected components $\B_1,\dots,\B_k$, then
		$$h_\B(t)=h_{\B_1}(t)\cdots h_{\B_k}(t).$$
		
		\item If $\B$ is connected, then 
		$$h_\B(t) = \sum_{I\subsetneq V} (t-1)^{|V|-|I|-1} h_{\B|_{I}}(t),$$ where $\B|_{I}:=\{J\in \B\mid J\subseteq I\}$ is the \textit{restriction} of $\B$ to $I$, and by convention  $h_{\B|_\emptyset}(t)=h_{\emptyset}(t)=1$, for the vacuous case $\B=\emptyset$ on $V=\emptyset$.
	\end{enumerate}
\end{proposition}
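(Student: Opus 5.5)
Since this is the $h$-version of Postnikov's $f$-recurrence (Theorem 7.11 of \cite{Postnikov2009}), the plan is to derive each item from the corresponding statement for $f$-polynomials, using the substitution $h_P(t)=f_P(t-1)$. Equivalently, I would argue directly from the face structure of $P_\B$. Item (1) is immediate: for $|V|=1$, $P_\B$ is a point, so $f=1$ and $h=1$.

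For item (2), I would recall that if $\B$ has connected components $\B_1,\dots,\B_k$ on the parts $V_1,\dots,V_k$ of $V$, then every block lies inside a single $V_i$. The Minkowski sum therefore splits as a sum of polytopes in complementary coordinate subspaces:
\[
P_\B=\sum_i\sum_{I\in\B_i}\Delta_I\cong P_{\B_1}\times\cdots\times P_{\B_k}.
\]
Faces of a product are products of faces, with dimensions adding, so $f_\B=\prod_i f_{\B_i}$. Substituting $t\mapsto t-1$ gives the same product for $h$.

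For item (3), with $\B$ connected, I would use the known face description of nestohedra: the faces of $P_\B$ correspond to nested sets containing $V$. Equivalently, the facets of $P_\B$ are indexed by the blocks $I\in\B\setminus\{V\}$, and the facet $F_I$ is combinatorially isomorphic to $P_{\B|_I}\times P_{\B/I}$, where $\B/I$ is the contraction on $V\setminus I$. From here I would establish the $f$-recurrence
\[
f_\B(t)=\sum_{I\subsetneq V} t^{|V|-|I|-1} f_{\B|_I}(t).
\]
The combinatorial mechanism is to count each face by the union $I$ of its maximal proper nested blocks. Postnikov phrases this through $\B$-forests: deleting the root of a $\B$-tree leaves a $\B|_I$-forest together with some number of "free" coordinates in $V\setminus I$. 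These free coordinates are the source of the factor $t^{|V|-|I|-1}$.

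Substituting $t\mapsto t-1$ in this $f$-recurrence then yields exactly the stated $h$-recurrence, with the convention $f_\emptyset=h_\emptyset=1$ covering the case $I=\emptyset$. The main obstacle is justifying the $f$-recurrence for connected $\B$ in the precise form above. In particular, one must check that the sum ranges over all proper subsets $I$, not only over blocks, and that the exponent is $|V|-|I|-1$. Rather than reprove this bijectively, I would invoke Theorem 7.11 of \cite{Postnikov2009} as given and verify only the substitution. As a sanity check, I would test the recurrence on the simplex, where $\B=\{\text{singletons},V\}$, and on $\B_0$ from \eqref{equation:B0}, which should recover $h_{\B_0}(t)=1+5t+5t^2+t^3$.
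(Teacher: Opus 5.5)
Your proposal is correct and takes the same route as the paper. The paper gives no independent proof; it records the statement as the image of Postnikov's $f$-recurrence \cite[Theorem 7.11]{Postnikov2009} under the substitution $h_P(t)=f_P(t-1)$, which is exactly what you do. Your extra justifications are sound: the product decomposition for (2), and for (3) the grouping of nested sets containing $V$ by the union $I$ of their maximal proper blocks. One caution: $\B$-trees index vertices, not faces, so it is this nested-set count, not the ``delete the root of a $\B$-tree'' phrasing, that actually produces the factor $t^{|V|-|I|-1}$.
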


Another recursion for the $f_{\B}(t)$ was derived by Zelevinsky \cite[Proposition 4.7]{Zelevinsky2006} using a link decomposition of the nested set complex. The corresponding formula for $h$-polynomials is as follows. 
\begin{proposition}[$h$ version of Proposition 4.7 \cite{Zelevinsky2006}]
Let $\B$ a building set on $V$, then we have that
$$\left (n-\kappa(\B)-(t-1)\frac{d}{dt}\right)h_{\B}(t)=\sum_{I\in \B\setminus\B_{\max}}h_{\B|_{I}}(t)h_{{}_I\backslash\B}(t),$$
where $\B_{\max}$ is the set of maximal blocks of $\B$, and ${}_I\backslash\B=\{J\subseteq V\setminus I\mid J\in \B \text{ or } J\cup I\in \B\}.$\footnote{We use a different notation here for ${}_I\backslash\B$ than the one in \cite{Zelevinsky2006} to avoid confusion with the definition of $\B/_{\A}$ that we use below.}
\end{proposition}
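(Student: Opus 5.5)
The plan is to derive the identity from two facts about the nestohedron $P_\B$: it is a simple polytope of dimension $d=n-\kappa(\B)$, and its facets are themselves products of smaller nestohedra. A general counting identity for simple polytopes then turns into the stated formula after the substitution $t\mapsto t-1$. Throughout, $n=|V|$ and $\kappa(\B)$ is the number of connected components of $\B$.

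\emph{Step 1 (simple-polytope identity).} Let $P$ be a simple $d$-polytope. Every $k$-face of $P$ with $k<d$ lies in exactly $d-k$ facets. The top face $P$ itself lies in none, which matches $d-k=0$ for $k=d$. Double counting pairs (face, facet containing it) therefore gives
\begin{equation*}
\sum_{F \text{ facet of } P} f_F(t)=\sum_{k\ge 0}(d-k)f_k t^k=\Bigl(d-t\frac{d}{dt}\Bigr)f_P(t).
\end{equation*}
Now evaluate at $t-1$. Since $h_P(t)=f_P(t-1)$, the chain rule gives $\frac{d}{dt}h_P(t)=f_P'(t-1)$, so $(t-1)f_P'(t-1)=(t-1)\frac{d}{dt}h_P(t)$. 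Hence
\begin{equation*}
\sum_{F} h_F(t)=\Bigl(d-(t-1)\frac{d}{dt}\Bigr)h_P(t).
\end{equation*}
Face numbers are multiplicative under products of polytopes, so $f_{Q\times R}=f_Qf_R$. The same then holds for $h$-polynomials: $h_{Q\times R}=h_Qh_R$.

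\emph{Step 2 (dimension).} A Minkowski sum of standard simplices supported on a connected building set on $V$ has dimension $|V|-1$. The polytope $P_\B$ is the product of the nestohedra of its connected components. Its dimension is therefore $\sum_i(|V_i|-1)=n-\kappa(\B)$.

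\emph{Step 3 (facets).} This is the main obstacle. I would use the standard description of the faces of a nestohedron, due to Postnikov and to Feichtner--Sturmfels. In dual terms, the nested set complex has one vertex for each block $I\in\B\setminus\B_{\max}$.
\begin{itemize}
\item The facet $F_I$ of $P_\B$ is the face maximizing the linear functional $\sum_{v\in I}x_v$.
\item Since $P_\B=\sum_{J\in\B}\Delta_J$, this face is the Minkowski sum of the faces of the summands $\Delta_J$ maximizing that functional. For $J\subseteq I$ this face is $\Delta_J$. For $J\not\subseteq I$ with $J\cap I\neq\emptyset$ it is $\Delta_{J\cap I}$. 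For $J\cap I=\emptyset$ it is $\Delta_J$.
\item Decomposing along the splitting $\R^V=\R^I\times\R^{V\setminus I}$, one checks that this Minkowski sum is normally equivalent to $P_{\B|_I}\times P_{{}_I\backslash\B}$. The blocks $J\subseteq I$ produce the factor $P_{\B|_I}$. The blocks $J$ meeting $V\setminus I$ project onto $J\setminus I$, and these sets are exactly the blocks of ${}_I\backslash\B$.
\end{itemize}
The delicate points are two. The first is the bookkeeping of the pieces $\Delta_{J\cap I}$ with $J\not\subseteq I$: they must not change the normal fan of the first factor, and this should follow from \ref{B1}, since $J\cap I$ is then a union of blocks of $\B|_I$. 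The second is verifying that these faces are genuinely facets, of dimension $d-1$, and that they are distinct for distinct $I\in\B\setminus\B_{\max}$. For the maximal blocks $I\in\B_{\max}$ the functional is constant on $P_\B$, so they contribute no facets.

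Combining Steps 1--3, the left-hand side becomes $\bigl(n-\kappa(\B)-(t-1)\frac{d}{dt}\bigr)h_\B(t)$. The facet sum becomes $\sum_{I\in\B\setminus\B_{\max}}h_{\B|_I}(t)\,h_{{}_I\backslash\B}(t)$, which is the claimed identity.
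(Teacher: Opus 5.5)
Your Steps 1 and 2 are correct. Step 3 breaks down because you attach the wrong face to $I$. The facet of $P_\B$ indexed by $I\in\B\setminus\B_{\max}$ is the face where $\sum_{v\in I}x_v$ is \emph{minimized}, i.e.\ where the inequality $\sum_{v\in I}x_v\ge\#\{J\in\B: J\subseteq I\}$ is tight. It is not the maximizing face. Your own (correct) computation of the maximizing faces of the summands gives $\bigl(\sum_{J\cap I\neq\emptyset}\Delta_{J\cap I}\bigr)\times P_{\B|_{V\setminus I}}$, which is the face minimizing $\sum_{v\in V\setminus I}x_v$. This is not $P_{\B|_I}\times P_{{}_I\backslash\B}$, and in general it is not a facet at all. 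For the path $1-2-3$, take $\B=\{1,2,3,12,23,123\}$ and $I=\{2\}$: maximizing $x_2$ picks out the vertex $(1,4,1)$, while the facet belonging to $\{2\}$ is the edge $(2,1,2)+\Delta_{\{1,3\}}$. Your sentence that blocks meeting $V\setminus I$ ``project onto $J\setminus I$'' contradicts the pieces $\Delta_{J\cap I}$ you had just computed. Your first ``delicate point'' is also false as stated. For the $4$-cycle $1-2-3-4-1$ and $I=\{1,2,3\}$, the block $J=\{1,3,4\}$ contributes $\Delta_{\{1,3\}}$, which turns the pentagon $P_{\B|_I}$ into a hexagon. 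So $J\cap I$ being a union of blocks of $\B|_I$ (here $\{1\}\cup\{3\}$) does not preserve the normal fan.

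The fix removes all of this bookkeeping. Minimizing $\sum_{v\in I}x_v$ over $\Delta_J$ gives $\Delta_J$ when $J\subseteq I$ and $\Delta_{J\setminus I}$ otherwise, so
\[ F_I=\Bigl(\sum_{J\in\B,\ J\subseteq I}\Delta_J\Bigr)\times\Bigl(\sum_{J\in\B,\ J\not\subseteq I}\Delta_{J\setminus I}\Bigr). \]
The first factor is $P_{\B|_I}$. By \ref{B1} applied to the $J$ meeting $I$, $\{J\setminus I : J\in\B,\ J\not\subseteq I\}={}_I\backslash\B$. Hence the second factor is a Minkowski sum of the simplices of $P_{{}_I\backslash\B}$ with multiplicities, and so is normally equivalent to it. That $F_I$ is a facet is then a dimension count: $\dim P_{\B|_I}=|I|-1$ since $I\in\B$, and ${}_I\backslash\B$ still has $\kappa(\B)$ components (the component $C\supsetneq I$ becomes the block $C\setminus I$), giving $\dim F_I=n-\kappa(\B)-1$. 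The $F_I$ are distinct because their normal vectors differ, and the fact that every facet arises this way is the standard facet description you can cite from Postnikov or Feichtner--Sturmfels. With this correction your argument is complete. The paper itself gives no proof of this proposition and only quotes it from Zelevinsky, whose argument is the polar dual of yours: there the link of the vertex $I$ in the nested set complex is the join of the nested complexes of $\B|_I$ and ${}_I\backslash\B$.
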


The main goal of this work is to provide three new ways of computing  $h_{\B}(t)$ for any building set $\B$: one as a sum of the M\"obius values on maximal intervals of a partially ordered set (\emph{poset} for short), and a pair of recurrences based on the notion of spanning sub-building sets. All our formulas, which we prove in the general context of hypergraphs, are derived from our novel construction of the \emph{poset of weighted spanning hypergraphs} $\WH(V)$ on $V$, see Definition \ref{definition:poset_of_weighted_hypergraphs}. 

We say that a hypergraph $H'$ is a  \emph{spanning subhypergraph} of a hypergraph $H$ if $V(H')=V(H)=V$ and $H' \subseteq H$. This relation defines an order $H'\leq H$ on the set of hypergraphs on $V$. To define a corresponding order on building sets we rely on a stricter version of the inclusion for hypergraphs. Indeed, we define the relation with respect to the minimal generating subset of a building set under the properties \ref{B1} and \ref{B2}.

\begin{definition}\label{definition:struts}
 For a building set $\B$ we denote by $E(\B)\subset \B$ the set of nonsingleton blocks $S\in \B$, such that whenever $S=I\cup J$ for some $I,J\in \B$ with $I\cap J\neq \emptyset$, we have $I=S$ or $J=S$.   We call the elements of $E(\B)$ the \emph{struts}\footnote{In \cite{DosenPetric2011} the authors call $E(\B)$ a \emph{bare hypergraph}. In this language, struts are the hyperedges of the underlying bare hypergraph.} of $\B$.
\end{definition}

For the building set $\B_0$ in \eqref
{equation:B0} we illustrate $E(\B_0)$ in Figure \ref{figure:struts_and_building_set}.

\begin{figure}[ht]
    \centering
    \captionsetup{justification=centering}
    \resizebox{0.2\linewidth}{!}{\definecolor{color_0}{HTML}{FFFFFF}
\definecolor{color_1}{HTML}{2A2A2A}
\definecolor{color_2}{HTML}{000000}

\begin{tikzpicture}[main_node_g/.style={circle,inner sep=2pt,draw = black, line width=0.1mm}]

\node[main_node_g, fill = color_0,text = color_1,minimum size=0.802em] (0) at (0.13,-0.33) {\tiny \textbf{1}};
\node[main_node_g, fill = color_0,text = color_1,minimum size=0.802em] (1) at (-0.26,1.26) {\tiny \textbf{2}};
\node[main_node_g, fill = color_0,text = color_1,minimum size=0.802em] (2) at (-2.25,0.46) {\tiny \textbf{3}};
\node[main_node_g, fill = color_0,text = color_1,minimum size=0.802em] (3) at (-0.66,-1.92) {\tiny \textbf{4}};

\begin{pgfonlayer}{background}
\fill[fill = color_2, opacity = 0.2, line width = 0.1mm, draw = color_2, solid]
(-1.03,-1.90)--(-1.03,-1.92)--(-1.03,-1.94)--(-1.03,-1.97)--(-1.02,-1.99)--(-1.02,-2.01)--(-1.01,-2.03)--(-1.00,-2.05)--(-0.99,-2.08)--(-0.98,-2.10)--(-0.97,-2.12)--(-0.96,-2.13)--(-0.94,-2.15)--(-0.93,-2.17)--(-0.91,-2.19)--(-0.90,-2.20)--(-0.88,-2.22)--(-0.86,-2.23)--(-0.84,-2.24)--(-0.82,-2.25)--(-0.80,-2.26)--(-0.78,-2.27)--(-0.75,-2.27)--(-0.73,-2.28)--(-0.71,-2.28)--(-0.68,-2.29)--(-0.66,-2.29)--(-0.64,-2.29)--(-0.62,-2.28)--(-0.59,-2.28)--(-0.57,-2.27)--(-0.55,-2.27)--(-0.53,-2.26)--(-0.51,-2.25)--(-0.48,-2.24)--(-0.47,-2.23)--(-0.45,-2.22)--(-0.43,-2.20)--(-0.41,-2.19)--(-0.39,-2.17)--(-0.38,-2.15)--(-0.36,-2.13)--(-0.35,-2.12)--(-0.34,-2.10)--(-0.33,-2.08)--(0.46,-0.49)--(0.47,-0.47)--(0.48,-0.44)--(0.49,-0.42)--(0.49,-0.40)--(0.50,-0.38)--(0.50,-0.35)--(0.50,-0.33)--(0.50,-0.31)--(0.50,-0.28)--(0.49,-0.26)--(0.49,-0.24)--(0.09,1.35)--(0.08,1.37)--(0.08,1.39)--(0.07,1.41)--(0.06,1.43)--(0.05,1.45)--(0.03,1.47)--(0.02,1.49)--(0.00,1.51)--(-0.01,1.53)--(-0.03,1.54)--(-0.05,1.55)--(-0.07,1.57)--(-0.09,1.58)--(-0.11,1.59)--(-0.13,1.60)--(-0.15,1.61)--(-0.17,1.61)--(-0.20,1.62)--(-0.22,1.62)--(-0.24,1.62)--(-0.26,1.62)--(-0.29,1.62)--(-0.31,1.62)--(-0.33,1.62)--(-0.36,1.61)--(-0.38,1.61)--(-0.40,1.60)--(-0.42,1.59)--(-0.44,1.58)--(-0.46,1.57)--(-0.48,1.55)--(-0.50,1.54)--(-0.52,1.53)--(-0.53,1.51)--(-0.55,1.49)--(-0.56,1.47)--(-0.57,1.45)--(-0.59,1.43)--(-0.60,1.41)--(-0.61,1.39)--(-0.61,1.37)--(-0.62,1.35)--(-0.63,1.33)--(-0.63,1.30)--(-1.03,-1.87)--cycle;
\end{pgfonlayer}

\path[draw  = color_2 ,thick, solid](0) edge node {} (2);

\path[draw  = color_2 ,thick, solid](1) edge node {} (2);
\end{tikzpicture}}
    \caption{$E(\B_0)=\{\{1,3\},\{2,3\},\{1,2,4\}\}$.}
    \label{figure:struts_and_building_set}
\end{figure}  

The sets $\B$ and $E(\B)$ determine each other uniquely on $V(\B)$, and hence we will  consider $\B$ and the pair $(V(\B),E(\B))$ indistinctly as the same object. This notation is inspired on the case of the graphical building set $\B(G)$ of a simple graph $G$ where the struts are precisely the edges of $G$, i.e., $E(G)=E(\B(G))$. For a general hypergraph we define $E(H):=E(\B(H))$. A consequence of this definition is that $\B(E(H))=\B(H)$. Furthermore, $P_{E(\B)}=P_{\B}$ and $E(\B)$ is the minimal hypergraph with this property.

We define spanning sub-building sets as the natural generalization of spanning subgraphs of a simple graph, as follows.  

\begin{definition}
We say that a building set $\A$ is a \emph{spanning sub-building set} of $\B$ if $E(\A)$ is a spanning subhypergraph of $E(\B)$, i.e., if $V(\A)=V(\B)$ and $E(\A) \subseteq E(\B)$.    
\end{definition}

\begin{remark} A less strict, and perhaps more natural, notion of a spanning sub-building set can be defined whenever $\A\subseteq \B$. However, this notion differs from the one considered in this article. In particular, as we will show, a version of Theorem \ref{Theorem:second_recurrence_h} below is true under the two notions, but the stricter condition results in recursive formulas with the least number of terms (see Proposition \ref{proposition:second_recurrence_h_hypergraphs}).
\end{remark}

We consider the order relation  $\A\leq \B$ whenever $\A$ is a spanning sub-building set of $\B$. Figure \ref{figure:spanning_building_set} shows the poset of building sets that lie below $\B_0$ from Figure \ref{figure:struts_and_building_set}. This poset is isomorphic to the Boolean algebra $\mathbb{B}_{E(\B)}$ on $E(\B)$.

\begin{figure}[ht]
    \centering
    \captionsetup{justification=centering}
    \resizebox{0.6\linewidth}{!}{\input{Figures/building_sets_poset}}
    \caption{The spanning sub-building sets of $\B_0$.}
    \label{figure:spanning_building_set}
\end{figure}

We show that the M\"obius values of the maximal intervals determined by weighted hypergraphs supported on $H$, in the poset $\WH(V)$ of Definition \ref{definition:poset_of_weighted_hypergraphs}, give---up to sign---the coefficients of $h_H(t)$.

\begin{theorem}\label{theorem:h_polynomial_as_mobius_values} Let $H$ be a hypergraph on $V$. Then 
\begin{equation}\label{equation:h_polynomial_as_mobius_values_hypergraph}
   h_{H}(t)=(-1)^{|H|}\sum_{\pmb{H}\in \WH(V)}\mu(\pmb{\hat{0}},\pmb{H})t^{w_{\pmb{H}}},
\end{equation}
where the sum is taken over all weighted hypergraphs  $\pmb{H}=(H,w_{\pmb{H}})$  supported on $H$ and  $t^{w_{\pmb{H}}}:=\prod_{I\in H_{\max}}t^{w_{\pmb{H}}(I)}$, with $H_{\max}$ the set of maximal subsets of $V$ connected by $H$. 

In particular,
\begin{equation*}
   h_{\B}(t)=(-1)^{|E(\B)|}\sum_{\pmb{E(\B)}\in \WH(V)}\mu(\pmb{\hat{0}},\pmb{E(\B)})t^{w_{\pmb{E(\B)}}}. 
\end{equation*}
\end{theorem}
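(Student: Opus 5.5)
The plan is to show that the right-hand side of \eqref{equation:h_polynomial_as_mobius_values_hypergraph} satisfies the recurrence of Proposition~\ref{proposition:recurrence_postnikov}, transported from building sets to hypergraphs. Define
$$F_H(t):=(-1)^{|H|}\sum_{\pmb{H}=(H,w_{\pmb{H}})}\mu(\pmb{\hat 0},\pmb{H})\,t^{w_{\pmb{H}}}.$$
Since that recurrence determines $h_{\B}$ uniquely, and $h_H=h_{\B(H)}$, it suffices to prove three things:
\begin{enumerate}
\item $F_H=1$ when $|V|=1$;
\item $F_H$ is multiplicative over the connected components $H_{\max}$;
\item for connected $H$, $F_H$ obeys the connected recursion with $h_{\B(H)|_I}$ replaced by $F_{H|_I}$.
\end{enumerate}
Here $H|_I$ denotes the hyperedges contained in $I$. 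Note that $\B(H|_I)=\B(H)|_I$. The specialization to $\B$ follows by taking $H=E(\B)$ and using $\B(E(\B))=\B$.

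The first step is to unpack Definition~\ref{definition:poset_of_weighted_hypergraphs} so as to describe the lower interval $[\pmb{\hat 0},\pmb{H}]$. I expect it to consist of weighted spanning subhypergraphs $\pmb{K}=(K,w_{\pmb K})$ with $K\subseteq H$, where each component $J\in K_{\max}$ carries a weight. The weight of a component of $H$ should be determined by, or bounded by, the weights of the components of $K$ that it contains.

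Given this description, two of the three properties should follow.
\begin{itemize}
\item \emph{Multiplicativity.} If $H$ has components $H_1,\dots,H_k$ on blocks $I_1,\dots,I_k$, then $[\pmb{\hat 0},\pmb{H}]$ should factor as a product of the intervals $[\pmb{\hat 0},\pmb{H_i}]$ in $\WH(I_i)$. Since the Möbius function is multiplicative on products of posets, the generating polynomials multiply. The signs also agree, because $|H|=\sum_i|H_i|$.
\item \emph{Base case.} This is immediate, as there is a single weighted hypergraph supported on the empty hypergraph of a singleton.
\end{itemize}

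The heart of the proof is the connected case. Here I would apply the defining recursion
$$\mu(\pmb{\hat0},\pmb H)=-\sum_{\pmb K<\pmb H}\mu(\pmb{\hat0},\pmb K),$$
multiply by $t^{w_{\pmb H}}$, and sum over the admissible weights of $\pmb H$. Then I would reorganize the sum over $\pmb K$ according to the underlying subhypergraph $K\subsetneq H$, or $K=H$ with smaller weight, and according to the vertex set $I\subsetneq V$ that plays the role of the distinguished piece in Postnikov's formula. The factor $(t-1)^{|V|-|I|-1}$ should arise from a geometric-type sum over weights: summing $t^{w}$ over weights dominating those of $\pmb K$ should give expressions like $t^a/(1-t)$, or finite versions of them, which telescope. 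Alternatively, one can do an inclusion–exclusion over the vertices outside $I$, each contributing a factor $(t-1)$.

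Independence of the representative $H$, that is, $F_H=F_{E(H)}$, should come for free. Both sides satisfy the same recurrence, since the restrictions satisfy $\B(H|_I)=\B(H)|_I$.

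I expect the main obstacle to be this connected-case bookkeeping. Specifically, one must match the Möbius recursion on weighted hypergraphs, where the sum runs over subhypergraphs, with Postnikov's recursion, where the sum runs over vertex subsets $I$. Many subhypergraphs $K$ have the same component structure, and their contributions must cancel in alternating fashion. This is where the sign $(-1)^{|H|}$ should be absorbed, via $\sum_{K}(-1)^{|K|}$ over hypergraphs with a fixed closure vanishing except in the minimal cases.

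If this direct matching becomes unwieldy, I would instead prove a Möbius-inversion identity. The idea is to compute $\sum_{\pmb K\le \pmb H}(\cdot)$ over all weighted hypergraphs in closed form, as a product over components of something like $t^{w}$. I would then invert it against the Boolean-like structure on subhypergraphs and compare with the $f$-polynomial recursion, namely Postnikov's forest count, evaluated at $t-1$.
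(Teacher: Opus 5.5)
Your framework (show that the Möbius generating polynomial satisfies a recurrence that determines $h$, then invoke uniqueness) is sound. Your base case and multiplicativity steps are also fine; the latter is the factorization of $[\pmb{\hat 0},\pmb{K}]$ over the components of $K$. The gap is the connected case. It carries the entire content of the theorem, and you leave it as speculation.

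Carry out your first step concretely. Sum $\sum_{\pmb{K}\le H^i}\mu(\pmb{\hat 0},\pmb{K})=0$ against $t^i$ over the maximal elements $H^0,\dots,H^{|V|-1}$, and use that $\pmb{K}\le H^i$ exactly when $i-w_{\pmb{K}}\in\{0,\dots,\kappa(K)-1\}$, with $w_{\pmb{K}}$ the total weight. The ``finite geometric sums'' you anticipate are then $t^{w_{\pmb{K}}}[\kappa(K)]_t$, and you obtain
\[
\sum_{K\le H}[\kappa(K)]_t\,M_K(t)=0,
\]
a recursion indexed by \emph{spanning subhypergraphs} of $H$. Nothing in $\WH(V)$ produces restrictions $H|_I$ to proper vertex subsets. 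Lower intervals factor over the pieces $K|_J$, $J\in K_{\max}$, which are restrictions of $K$, not of $H$.

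So converting this into Postnikov's recursion is not bookkeeping. The displayed identity, together with multiplicativity, already determines every $M_H$. Hence proving that $(-1)^{|H|}M_H$ satisfies Postnikov's recursion is equivalent to the theorem itself.

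The cancellation mechanism you propose also fails as stated. If \emph{fixed closure} means fixed $\B(K)$, then for a simple graph $H$ the map $K\mapsto\B(K)$ is injective, since the $2$-element blocks of $\B(K)$ are exactly the edges of $K$; there is nothing to cancel. If it means fixed $K_{\max}$, the alternating sums are Möbius values of the bond lattice, which are nonzero in general. For example, for the triangle with $K_{\max}=\{V\}$ the sum is $3-1=2$.

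The missing idea is to work in the opposite direction. Keep the recursion the poset hands you, and prove that $(-1)^{|K|}h_K(t)$ satisfies it:
\[
\sum_{K\le H}[\kappa(K)]_t\,(-1)^{|K|}h_K(t)=0\qquad\text{for connected } H,\ |V|>1 .
\]
This is a genuinely combinatorial statement about nestohedra, and the paper proves it as follows.
\begin{itemize}
\item It expands $h_K$ as the descent enumerator of $K$-forests.
\item It encodes the factor $[\kappa(K)]_t$ by choosing a root $r$ of the forest $F$, weighted by $t^{\sigma_F(r)}$, where $\sigma_F(r)$ is the position of $r$ among the ordered roots.
\item It builds a sign-reversing involution on the triples $(K,F,r)$. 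The involution toggles a fixed hyperedge $\lset{I}_r\ni r$, detaching or attaching subtrees at $r$ so that $\des(F)+\sigma_F(r)$ is preserved.
\end{itemize}
Uniqueness then gives $(-1)^{|H|}h_H=M_H$. Your fallback of comparing with a forest count points vaguely in this direction. Without such an involution, or an equivalent identity, the proposal has a hole exactly where the difficulty lies.
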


In Figure \ref{figure:spanning_weighted_building_set} we show the lower order ideal in $\WH([4])$ generated by the weighted hypergraphs supported by $E(\B_0)$ in Figure \ref{figure:struts_and_building_set}. It can be checked using the recursive definitions of the M\"obius function (see equations \eqref{equation:mobius_recursion_1} and \eqref{equation:mobius_recursion_2}) that the values $\mu(\pmb{\hat{0}},E(\B_0)^i)$, for $i=0,1,2,3$, are $-1$, $-5$, $-5$, and $-1$, respectively. They coincide with the coefficients of $(-1)^{|E(\B_0)|}h_{\B_0}(t)=-h_{\B_0}(t)$ above.

Finally, we use Theorem \ref{theorem:h_polynomial_as_mobius_values} to prove that the $h$-polynomials of nestohedra can be computed using any of the next two recursions. 
\begin{theorem}	\label{Theorem:second_recurrence_h} The $h$-polynomials associated to building sets are determined by the following recurrence relations:
	\begin{enumerate}
		\item If $\B$ consists of a singleton, $h_{\B}(t)=1$.
		
		\item If $\B$ has connected components $\B_1,\dots,\B_k$, then
		$$h_\B(t)=h_{\B_1}(t)\cdots h_{\B_k}(t).$$

        \item If $\B$ is a connected building set with $|V(\B)|> 1$, then $h_{\B}(t)$ satisfies the two recurrences:
    \begin{align}\label{equation:hrecursion1}
    h_\B(t) = \sum_{\A< \B} (-1)^{|E(\B)|-|E(\A)|-1}[\kappa(\A)]_t h_{\A}(t),
\end{align}
and
\begin{align}\label{equation:hrecursion2}
    h_{\B}(t) = \sum_{\substack{\A \le \B\\ E(\A)\neq \emptyset}}(-1)^ {|E(\A)|-1} h_{\B/_\A}(t)\prod_{I\in \A_{\max}} [\,|I|\,]_t,
\end{align}
        where ${\B/_\A}$ is the contraction of the building set $\B$ by $\A$ (see Definition \ref{definition:building_set_contraction}) and $[n]_t:=1+t+\cdots +t^{n-1}$.
	\end{enumerate}
\end{theorem}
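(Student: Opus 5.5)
The plan is to take parts (1) and (2) directly from Proposition \ref{proposition:recurrence_postnikov}, since they do not depend on the new poset. Both recurrences in part (3) should then come from Theorem \ref{theorem:h_polynomial_as_mobius_values}, by applying the two defining recursions \eqref{equation:mobius_recursion_1} and \eqref{equation:mobius_recursion_2} of the M\"obius function to the elements $\pmb{E(\B)}=(E(\B),w)$ of $\WH(V)$. Throughout I use Theorem \ref{theorem:h_polynomial_as_mobius_values} in its hypergraph form with $H=E(\B)$, and likewise for every spanning sub-building set $\A\le\B$ with $H=E(\A)$.

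\textbf{Recurrence \eqref{equation:hrecursion1}.} Since $\B$ is connected, $H_{\max}=\{V\}$, so $w$ is a single integer. First I would write $\mu(\pmb{\hat 0},(E(\B),w))=-\sum\mu(\pmb{\hat0},\pmb{K})$ over $\pmb{\hat 0}\le\pmb{K}<(E(\B),w)$. Multiplying by $t^w$, summing over $w$ and exchanging the order of summation turns the right-hand side into a sum over weighted hypergraphs $\pmb{K}=(E(\A),w')$. Here the support is $E(\A)\subseteq E(\B)$, and $w'$ assigns a weight to each block of $\A_{\max}$. The support must be proper, $E(\A)\subsetneq E(\B)$, because distinct weightings of the same support should be incomparable; I expect this to follow from Definition \ref{definition:poset_of_weighted_hypergraphs}. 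The key claim to verify from the definition is the following: $(E(\A),w')<(E(\B),w)$ holds exactly when
\[
\textstyle\sum_{I\in\A_{\max}}w'(I)\le w\le \sum_{I\in\A_{\max}}w'(I)+\kappa(\A)-1 .
\]
Granting this, the inner sum over $w$ equals $t^{w'}[\kappa(\A)]_t$. Applying Theorem \ref{theorem:h_polynomial_as_mobius_values} to each $\A$ then gives
\[
(-1)^{|E(\B)|}h_\B(t)=-\sum_{\A<\B}[\kappa(\A)]_t\,(-1)^{|E(\A)|}h_\A(t),
\]
which is \eqref{equation:hrecursion1}. The bottom element $\A$ with $E(\A)=\emptyset$ has $h_\A=1$ and contributes $\pm[|V|]_t$, which is consistent with the formula.

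\textbf{Recurrence \eqref{equation:hrecursion2}.} Here I would use the dual recursion
\[
\mu(\pmb{\hat0},\pmb{x})=-\sum_{\pmb{\hat0}<\pmb{y}\le\pmb{x}}\mu(\pmb{y},\pmb{x}).
\]
The terms are grouped according to the support $E(\A)\neq\emptyset$ of $\pmb{y}$. Two structural facts are needed, and this is the step I expect to be the main obstacle.
\begin{enumerate}
\item For a fixed weighting of $E(\A)$, the upper interval $[\pmb{y},\pmb{x}]$ is isomorphic to a lower interval $[\pmb{\hat 0},\pmb{x}']$ in $\WH$ of the contracted vertex set. Here $\pmb{x}'$ is supported on $E(\B/_\A)$ (Definition \ref{definition:building_set_contraction}), and its weight is shifted by the weights of $\pmb{y}$.
\item The weights that $\pmb{y}$ can carry on each block $I\in\A_{\max}$ range over an interval of length $|I|$, so that summing $t^{w}$ over them produces the factor $\prod_{I\in\A_{\max}}[\,|I|\,]_t$.
\end{enumerate}
To establish the isomorphism I would first check it on atoms, then show that the map $\pmb{K}\mapsto \pmb{K}/_\A$ is order preserving with an inverse. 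I would also check that the number of hyperedges behaves additively, $|E(\B)|=|E(\A)|+|E(\B/_\A)|$, which is needed for the signs.

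With these two facts in hand, applying Theorem \ref{theorem:h_polynomial_as_mobius_values} to $\B/_\A$ turns each group into $(-1)^{|E(\B/_\A)|}h_{\B/_\A}(t)\prod_{I\in\A_{\max}}[\,|I|\,]_t$. After collecting signs this yields \eqref{equation:hrecursion2}. If the weight bookkeeping in the second fact fails, the fallback is to verify both steps first for atoms ($|E(\A)|=1$) and extend them by induction on $|E(\A)|$.
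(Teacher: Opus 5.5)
Your M\"obius bookkeeping is sound and is essentially the paper's Proposition~\ref{proposition:recurrenceM}. Summing \eqref{equation:mobius_recursion_1}, resp.\ \eqref{equation:mobius_recursion_2}, over the maximal elements $(E(\B),w)$ yields \eqref{equation:M_lemma_1}, resp.\ \eqref{equation:M_lemma_2}, for $M_{E(\B)}$. Your weight-range claim for the first recurrence also follows at once from Definition~\ref{definition:poset_of_weighted_hypergraphs}. The gap is the input: you treat Theorem~\ref{theorem:h_polynomial_as_mobius_values} as already established, but in the paper it is \emph{deduced from} \eqref{equation:hrecursion1}. The paper first proves that $h_H$ satisfies \eqref{equation:hrecursion1_hypergraphs}, notes that $(-1)^{|H|}M_H$ satisfies the same recurrences (Proposition~\ref{proposition:recurrenceM}), and concludes $h_H=(-1)^{|H|}M_H$ by uniqueness. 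Given Proposition~\ref{proposition:recurrenceM} and parts (1)--(2), the identity $h_H=(-1)^{|H|}M_H$ and recurrence \eqref{equation:hrecursion1_hypergraphs} imply each other. So your proposal only proves the implication Theorem~\ref{theorem:h_polynomial_as_mobius_values} $\Rightarrow$ Theorem~\ref{Theorem:second_recurrence_h}, and the real content is left unproved. What is missing is an independent reason why actual $h$-polynomials satisfy $\sum_{H'\le H}[\kappa(H')]_t(-1)^{|H'|}h_{H'}(t)=0$ (Proposition~\ref{proposition:main_argument}). The paper gets this in three steps:
\begin{enumerate}
\item It uses the descent formula $h_{H'}(t)=\sum_F t^{\des(F)}$ over $H'$-forests (Proposition~\ref{proposition:h_vector_H_forests}).
\item It writes $[\kappa(H')]_t=\sum_r t^{\sigma_F(r)}$ over the roots $r$ of $F$.
\item It cancels the triples $(H',F,r)$ by a sign-reversing involution. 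For a fixed hyperedge $\lset{I}_r\ni r$, the involution toggles $\lset{I}_r$ in $H'$. Removing $\lset{I}_r$ detaches from $r$ the subtrees that become components; adding it attaches to $r$ the roots of the trees met by $\lset{I}_r$. One checks that $\des(F)+\sigma_F(r)$ is unchanged while $(-1)^{|H'|}$ flips.
\end{enumerate}

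Separately, the step you flag as the main obstacle for \eqref{equation:hrecursion2} is set up wrongly. By Lemma~\ref{lemma:intervals_defined_by_an_element}, the filter above $\pmb{y}=(E(\A),w_{\pmb y})$ is isomorphic to $\WH(E(\B)/_{E(\A)})$ for the \emph{hypergraph} contraction. That contraction in general has repeated, singleton and non-strut hyperedges. So the filter is not a poset supported on $E(\B/_\A)$, and $|E(\B)|=|E(\A)|+|E(\B/_\A)|$ is false. Take the triangle, with $E(\B)=\{\{1,2\},\{1,3\},\{2,3\}\}$ and $E(\A)=\{\{1,2\}\}$. The contraction is a double edge, so $|E(\B/_\A)|=1\neq 2$. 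With $\pmb y=(E(\A),0)$, the interval $[\pmb y,(E(\B),0)]$ is a four-element diamond with M\"obius value $1$, not a two-element chain with value $-1$. The correct bookkeeping uses $|E(\B)/_{E(\A)}|=|E(\B)|-|E(\A)|$ and $h_{E(\B)/_{E(\A)}}=h_{\B/_\A}$ (Definition~\ref{definition:building_set_contraction}). This needs the M\"obius description for arbitrary hypergraphs, not just bare ones.
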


\begin{remark} Comparing the recursion of Proposition \ref{proposition:recurrence_postnikov} with the new recursions of Theorem \ref{Theorem:second_recurrence_h}, the reader may notice that they are fundamentally different and, in particular, involve a different number of terms. The latter requires fewer terms to be computed when applied to building sets $\B$ satisfying $|E(\B)|<|V(\B)|$.
\end{remark}

\begin{remark}
    A key application of \eqref{equation:hrecursion1} appears in  \cite{AvilaCarrilloGonzalezDLeon2026} where we use this recurrence to extend and prove a conjectural relation, posed by the third author and Wachs in \cite{GonzalezDLeonWachs2026}, between a new polynomial invariant of a graph (called the $\mu$-polynomial) and the $h$-polynomial of its nestohedron.
\end{remark}

This article is organized as follows: In Section \ref{section:poset_of_weighted_hypergraphs} we introduce the poset $\WH(V)$ of weighted hypergraphs on vertex set $V$, describe some of its structural properties and prove some relations satisfied by its M\"obius function. The proofs of Theorems \ref{theorem:h_polynomial_as_mobius_values} and \ref{Theorem:second_recurrence_h} in Section \ref{section:proofs_of_main_results} rely on the concept of a $\B$-forest, which we extend to define $H$-forests and discuss some of its equivalent definitions in Section \ref{section:B_forests}. The main results also depend on the auxiliary Proposition \ref{proposition:main_argument} whose proof relies on a judicious sign-reversing involution, which we construct in Section \ref{section:sign_reversing_involution}. We conclude in Section \ref{section:examples} with two examples using  Theorem \ref{Theorem:second_recurrence_h} to compute the $h$-polynomials of two infinite families of nestohedra.

\section{The poset of weighted hypergraphs}\label{section:poset_of_weighted_hypergraphs}

In this section we introduce the poset of weighted hypergraphs and derive some properties of its M\"obius function. Moreover, we study the auxiliary polynomial $M_H(t)$ given by the left-hand side of \eqref{equation:h_polynomial_as_mobius_values_hypergraph} and develop its main properties. This will be essential in the proof of Theorem \ref{theorem:h_polynomial_as_mobius_values}. We refer the reader to \cite{Rota1964, Wachs2007} for undefined terminology and results concerning posets.

 \subsection{Hypergraphs and connectivity}\label{section:hypergraphs_and_connectivity}
 
 Formally, a hypergraph $H$ on $V=:V(H)$ is a collection of pairs $\lset{I}=(I,i)\in H$, called \emph{hyperedges}, where $\emptyset\neq I\subseteq V$ and $i$ is any label that makes the pair unique in $H$.  We consider two hyperedges $(I,i)$ and $(J,j)$ as different if either $I\neq J$ or $i\neq j$, and we denote by $V(\lset{I}):=I$ the \emph{underlying vertex set} of $\lset{I}$. If there is a single hyperedge $\lset{I}=(I,i)$ with vertex set $I\subseteq V(H)$, we can abuse notation and ignore its label,  denoting such hyperedge simply by $I$.  Building sets $\B$, and more generally  hypergraphs without repeated hyperedges, can be considered as hypergraphs---$I\in \B$ can, for example, be viewed as the hyperedge $(I,I)$, with underlying set $I$ and label $I$.    
 
    For any hypergraph $H$ on $V$ and $\emptyset \neq I\subseteq V$ we define the \emph{restriction}  of $H$ to $I$ as the hypergraph on vertex set $I$ and hyperedges  given by
    $$H|_{I}=\{\lset{J}\mid V(\lset{J})\subseteq I\}.$$ We say that $I\subseteq V$ is \emph{connected} by $H$ if there is no nontrivial bipartition (\emph{separation}) $I=A\sqcup B$ such that $H|_{I}=H|_{A}\sqcup H|_{B}$. The set $H_{\max}$ consists of the inclusion-maximal subsets of $V$ connected by $H$. The elements in $H_{\max}$ are pairwise disjoint and form a partition of $V$. The \textit{connected components} of $H$ are the restrictions $H|_{I}$, for each $I\in H_{\max}$. We write $\kappa(H)=|H_{\max}|$ and we say that $H$ is \emph{connected} whenever $H_{\max}=\{V\}$---and hence $\kappa(H)=1$.

    The collection
    \begin{equation*}\label{equation:building_set_of_H}
      \B(H)=\{I\subseteq V\mid H|_I \text{ is connected}\}
    \end{equation*}
     forms a building set and, as we mentioned in Section \ref{section:introduction}, every building set $\B$ is of this form.

\subsection{Contraction and dispensable hyperedges}\label{section:contraction_and_dispensable_elements}

We define the contraction of a hypergraph by a spanning subhypergraph.

\begin{definition}\label{definition:contraction_of_subhypergraphs}
    Given $H' \leq H$ we define the \emph{contraction} $H/_{H'}$ as the hypergraph on vertex set $V(H/_{H'})=H'_{\max}$ and hyperedges given by
\begin{equation}\label{equation:definition:contraction_of_hypergraphs}
   H/_{H'}=\bigg \{\bigg(\{I \in H'_{\max} \mid I\cap V(\lset{J})\neq \emptyset\},\lset{J}\bigg)\,\bigg |\, \lset{J}\in H\setminus H'\bigg \}.
\end{equation} In particular,  $|H/_{H'}|=|H|-|H|'$.
\end{definition}

Note that the restriction operation is defined similarly for hypergraphs and for building sets, since both families are closed under the same restriction. In contrast, these families are not closed under \eqref{equation:definition:contraction_of_hypergraphs}.  Indeed, even when $H$ does not have multiple hyperedges, singleton hyperedges, or non-strut hyperedges, all three cases may appear in a contraction $H/_{H'}$. Therefore contraction must be defined slightly differently for building sets.

We say that two hypergraphs $H$ and $H'$ are isomorphic if there is a bijection $f:V(H)\to V(H')$ such that for every $I\subseteq V$, we have that  
$$|\{\lset{I}\in H \mid V(\lset{I})=I\}|=|\{\lset{J}\in H' \mid V(\lset{J})=f(I)\}|,$$
i.e., the number of hyperedges in $H$ with underlying vertex set $I$ is the same as the number of hyperedges in $H'$ with underlying vertex set $f(I)$.

Following a similar idea as in \cite{DosenPetric2011}, we say that $\lset{J}\in H$ is \emph{dispensable} in $H$ when $H|_{V(\lset{J})}\setminus \lset{J}$ is connected. In the language of \cite{DosenPetric2011}, a \emph{bare hypergraph} is a hypergraph without dispensable hyperedges. Note that for a hypergraph $H$, $E(H)$ coincides with any representative of the isomorphism class of the hypergraphs obtained after a one-by-one removal of dispensable elements  at each step. It follows that the resulting hypergraphs are all isomorphic,  independently of the order of the dispensable hyperedges removed at each step. 
 In particular, their hyperedges can be regarded as having no labels.

\begin{definition}\label{definition:building_set_contraction}
For building sets $\A$ and $\B$ such that $\A \le \B$, i.e.,  $E(\A)\subseteq E(\B)$, we define \textit{the contracted building set} $\B/_{\A}$ as the building set associated to the contracted hypergraph $\B/_\A$, with $\B$ and $\A$ regarded as hypergraphs.
\end{definition}

A consequence of Definition \ref{definition:building_set_contraction} is that  the struts of $\B/_{\A}$ can be obtained from either  hypergraph contractions $\B/_{\A}$ or $E(\B)/_{E(\A)}$, after removing all dispensable hyperedges. The following example illustrates Definitions \ref{definition:contraction_of_subhypergraphs} and  \ref{definition:building_set_contraction}.

\begin{example} Consider the two hypergraphs on $[9]$
\begin{align*}
    H=\{&\{1,3\}, \{3,5\}, \{5,6\}, \{7,8\}, \{1,2,3\}, \{1,5,6\},\\
   &\{4,5,6\}, \{7,8,9\}, \{4,7,8,9\}, \{3,4,5,7,8,9\}\}\\
    H'=\{&\{1,5,6\}, \{4,7,8,9\}, \{7,8\}\}.
\end{align*}
Note that $H'\le H$ and both of them are bare hypergraphs.
 The contraction $H/_{H'}$ has vertex set $$H'_{\max} = \{ I_1 := \{1,5,6\}, I_2: = \{2\}, I_3 := \{3\}, I_4 := \{4,7,8,9\} \} = V(H/_{H'}),$$ and the seven hyperedges \begin{align*}
&(\{I_1,I_3\},\{1,3\}), (\{I_1,I_3\},\{3,5\}) \text{ with the same underlying vertex set},\\
&(\{I_1\},\{5,6\}), (\{I_4\},\{7,8,9\}) \text{ which are singletons},\\
&(\{I_1,I_2,I_3\}, \{1,2,3\}),\, (\{I_1,I_4\},\{4,5,6\}), \text{ and }\\
&(\{I_1,I_3,I_4\},\{3,4,5,7,8,9\}), \text{ which is not a strut}.
\end{align*} Removing the dispensable hyperedges we obtain the struts 
    $$E(H/_{H'}) = \{ \{I_1,I_3\}, \{I_1,I_4\}, \{I_1,I_2,I_3\} \}$$ of the building set $\B(H)/_{\B(H')}$, see Figure \ref{figure:contraction_hypergraph}.

    \begin{figure}[ht]
        \centering 
        \resizebox{\linewidth}{!}{\input{Figures/example_contractions}}
        \caption{Contraction of hypergraphs and building sets.}
        \label{figure:contraction_hypergraph}
    \end{figure}

\end{example}

\subsection{Weighted hypergraphs}\label{section:WH}

A \emph{weighted hypergraph} on $V$ is a pair $\pmb{H}=(H,w_{\pmb{H}})$ where $H$ is a hypergraph on $V$ and $w_{\pmb{H}}:H_{\max}\rightarrow \N$ is a weight function that assigns a value $0\le w_{\pmb{H}}(I) <|I|$, for every $I\in H_{\max}$.

\begin{definition}\label{definition:poset_of_weighted_hypergraphs}
The poset of weighted hypergraphs $\WH(V)$ is defined by relations $\pmb{H'}\le \pmb{H}$ whenever $H' \leq H$ and for every $J\in H_{\max}$ of the form $J=I_1\cup\cdots \cup I_k$ where $I_1,\dots,I_k\in H'_{\max}$, then 
\begin{equation}\label{equation:definition_poset_weighted_hypergraphs}
  w_{\pmb{H}}(J)=\mu+\sum_{j=1}^kw_{\pmb{H}'}(I_j),\qquad 0\le \mu<k.  
\end{equation}
\end{definition}
 
The poset $\WH(V)$ has a unique minimal element $\pmb{\hat{0}}=(\emptyset,w_{\hat{0}})$, which is formed by the empty hypergraph $\emptyset$ on $V$, where every point of $V$ has weight $0$. 

For a hypergraph $H$ on $V$ we denote by $\WH(H)$ the lower ideal generated by weighted hypergraphs of the form $\pmb{H}=(H,w_{\pmb{H}})$, i.e., $\WH(H)$ has maximal elements of the form $\pmb{H}=(H,w_{\pmb{H}})$ where $w_{\pmb{H}}$ is a valid weight function of $H$. Note that $H' \leq H$ implies $\WH(H')\subseteq \WH(H)$. If $\
H$ is connected ($H_{\max}=\{V\}$), we denote the maximal elements in $\WH(H)$ by $H^0,H^1,\dots, H^{|V|-1}$, where $H^i=(H,w_{{H^i}})$ and $w_{{H^i}}(V)=i$. Figure \ref{figure:spanning_weighted_building_set} gives the poset $\WH(E(\B_0))$, where $\B_0$ is the building set in Figure \ref{figure:struts_and_building_set}.

\begin{sidewaysfigure}

\centering
  \input{Figures/weighted_hypergraphs_poset}
   \caption{The poset $\WH(E(\B_0))$.}
\label{figure:spanning_weighted_building_set}
\end{sidewaysfigure}

The proof of the following lemma is left to the reader. It is a  a direct application of the definition of $\WH(H)$ together with the definitions of restriction and contraction of hypergraphs.

\begin{lemma}\label{lemma:intervals_defined_by_an_element}
Given $\pmb{H'}\in \WH(H)$, we have the  isomorphisms of posets
    \begin{align}
            U_H(\pmb{H'})& \cong \WH(H/_{H'}),\label{equation:lemma_intervals_eq1}\\
[\pmb{\hat{0}},\pmb{H'}]&\cong \prod_{I\in H'_{\max}}\left [\pmb{\hat{0}},H'|_{I}^{w_{\pmb{H'}}(I)}\right ],\label{equation:lemma_intervals_eq2}
    \end{align}
    where $U_H(\pmb{H'})$ is the principal filter---or principal upper order ideal---defined by $\pmb{H'}$ in $\WH(H)$ and $[\pmb{\hat{0}},\pmb{H'}]$ indicates the closed interval between $\pmb{\hat{0}}$  and  $\pmb{H'}$ in $\WH(H)$.  
\end{lemma}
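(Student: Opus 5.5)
Both isomorphisms will be written down explicitly, and the check is that they preserve the order relation of Definition \ref{definition:poset_of_weighted_hypergraphs} in both directions. The key observation is transitivity of connectivity. If $H'\le K\le H$, then every block of $K_{\max}$ is a disjoint union of blocks of $H'_{\max}$. Moreover, a subset $\mathcal{S}\subseteq H'_{\max}$ is connected by $K/_{H'}$ if and only if $\bigcup\mathcal{S}$ is connected by $K$. This holds because each block of $H'_{\max}$ is internally connected by $H'\subseteq K$, and a hyperedge $\lset{J}\in K\setminus H'$ links exactly the blocks it meets. I will prove this fact first, since both parts rely on it.

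For \eqref{equation:lemma_intervals_eq1}, define $\Phi$ on $U_H(\pmb{H'})$ by sending $\pmb{K}=(K,w_{\pmb{K}})\ge\pmb{H'}$ to the weighted hypergraph on $H'_{\max}$ with hyperedges $K/_{H'}\le H/_{H'}$. By the observation above, its components correspond to the blocks $J=I_1\cup\cdots\cup I_k$ of $K_{\max}$. The condition $\pmb{H'}\le\pmb{K}$ says exactly that $w_{\pmb{K}}(J)=\mu_J+\sum_j w_{\pmb{H'}}(I_j)$ with $0\le\mu_J<k$. So I set the weight of the component $\{I_1,\dots,I_k\}$ to be $\mu_J$, which is a valid weight because $\{I_1,\dots,I_k\}$ has $k$ elements.

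The inverse of $\Phi$ takes $K''\le H/_{H'}$, identifies its hyperedges with the labels $\lset{J}\in H\setminus H'$, sets $K=H'\cup K''$, and recovers the weights by adding back $\sum_j w_{\pmb{H'}}(I_j)$. Checking that $\Phi$ preserves order in both directions comes down to the fact that the "$\mu$" offsets are additive. Suppose $\pmb{K_1}\le\pmb{K_2}$ and a block $L$ of $K_2$ is the union of blocks $J_1,\dots,J_m$ of $K_1$, where each $J_r$ has $k_r$ sub-blocks from $H'$. Then
$$w_{\pmb{K_2}}(L)=\nu+\sum_r w_{\pmb{K_1}}(J_r)=\nu+\sum_r \mu_{J_r}+\sum w_{\pmb{H'}}(\cdot),$$
and this is the same expression as the relation between $\Phi(\pmb{K_1})$ and $\Phi(\pmb{K_2})$ in $\WH(H/_{H'})$.

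For \eqref{equation:lemma_intervals_eq2}, I use the same observation in the restriction direction. Any $\pmb{K}\le\pmb{H'}$ has $K\subseteq H'$, so every block of $K_{\max}$ lies inside a unique $I\in H'_{\max}$. Hence $K=\bigsqcup_{I}K|_I$, and the weight function splits into its restrictions to the blocks of each $K|_I$. The condition $\pmb{K}\le\pmb{H'}$ then splits blockwise into the conditions $(K|_I,w|_I)\le H'|_I^{w_{\pmb{H'}}(I)}$. Conversely, a tuple of such elements glues to a weighted hypergraph on $V$ lying below $\pmb{H'}$. Comparisons inside the interval also split blockwise, since the defining condition \eqref{equation:definition_poset_weighted_hypergraphs} is local to each component of the larger element.

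I expect the main obstacle to be bookkeeping rather than any conceptual difficulty. The hyperedges of the contraction keep their labels $\lset{J}$, so the correspondence $K\mapsto K/_{H'}$ must be a genuine bijection onto the subhypergraphs of $H/_{H'}$, multiplicities included. The connectivity-transfer fact must also be argued carefully, because singleton and repeated hyperedges can appear in $H/_{H'}$.
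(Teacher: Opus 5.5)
Your proposal is correct and is essentially the direct verification from the definitions of $\WH(H)$, restriction and contraction that the paper explicitly leaves to the reader; the connectivity-transfer fact (a set $\mathcal{S}\subseteq H'_{\max}$ is connected by $K/_{H'}$ if and only if $\bigcup\mathcal{S}$ is connected by $K$) is the key ingredient, and additivity of the offsets gives order preservation. The only checks you leave implicit are that the reconstructed weight $\mu_J+\sum_j w_{\pmb{H'}}(I_j)$ stays below $|J|$ (since $\sum_j w_{\pmb{H'}}(I_j)\le |J|-k$), and that every validly weighted subhypergraph of $H$ (resp.\ $H/_{H'}$) lies in the lower ideal $\WH(H)$ (resp.\ $\WH(H/_{H'})$), which follows by extending upward with offset $0$.
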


\subsection{The M\"obius function in \texorpdfstring{$\WH(V)$}{}}\label{section:Mobius}
Recall that the \emph{M\"obius function} of a poset $P$, see e.g.  \cite{Rota1964, Wachs2007}, denoted $\mu_P(x,y)$ (or  $\mu(x,y)$ when the poset is understood), is defined recursively on the set $\textup{Int}(P)$ of closed intervals $[x,y]$ in $P$ by $\mu(x,x)=1$ and satisfying for all $x<y$ either one of the following two equivalent recursions:
\begin{align}
    \mu(x,y)&=-\sum_{x\le z<y}\mu(x,z),\text{ or }\label{equation:mobius_recursion_1}\\
    \mu(x,y)&=-\sum_{x< z\le y}\mu(z,y).\label{equation:mobius_recursion_2}
\end{align}

In order to establish the formula in \eqref{equation:h_polynomial_as_mobius_values_hypergraph} for a given hypergraph $H$ on $V$, we introduce the auxiliary polynomial
\begin{equation}\label{equation:definition_M_poly}
  M_{H}(t):=\sum_{\pmb{H}\in 
\WH(V)
}\mu(\pmb{\hat{0}},\pmb{H})t^{w_{\pmb{H}}},
\end{equation}
  where the sum is taken over all weighted hypergraphs  $\pmb{H}=(H,w_{\pmb{H}})$  supported on $H$ and $t^{w_{\pmb{H}}}:=\prod_{I\in H_{\max}}t^{w_{\pmb{H}}(I)}$. Theorem \ref{theorem:h_polynomial_as_mobius_values} states  that $h_{\B(H)}(t)=(-1)^{|H|}M_{H}(t)$ for any hypergraph $H$, and in particular for $H=E(\B)$.

\begin{example} As mentioned after Theorem \ref{theorem:h_polynomial_as_mobius_values}, Figure \ref{figure:spanning_weighted_building_set} gives the M\"obius values $-1,-5,-5,-1$ for  $\mu(\pmb{\hat{0}},E(\B_0)^i)$, $i=0,1,2,3$, respectively. Thus  
$$M_{E(\B_0)}(t)=-1-5t-5t^2-t^3.$$
\end{example}

The following proposition establishes a recursive way to compute the polynomials $M_H(t)$, in the same spirit of Proposition \ref {Theorem:second_recurrence_h} for $h$-polynomials. The proof follows a similar idea as in \cite[Theorem 5.7]{GonzalezDLeonWachs2026}.

\begin{proposition}\label{proposition:recurrenceM} Let $H$ be an hypergraph on $V$.

\begin{enumerate}
\item If $H=\emptyset$, then $M_{H}(t)=1$.

\item If $H$ has connected components $H_1,\dots,H_k$, then $$M_H(t)=M_{H_1}(t)\cdots M_{H_k}(t).$$

\item If $H$ is connected and  $|V(H)|>1$, then  
 \begin{align}
     \sum_{H' \leq H}[\kappa(H')]_tM_{H'}(t)&=0,\label{equation:M_lemma_1}\\
     \sum_{H' \leq H}M_{H/_{H'}}(t)\prod_{I \in H'_{\max}}[|I|]_t&=0.\label{equation:M_lemma_2}
 \end{align} 
\end{enumerate}
\end{proposition}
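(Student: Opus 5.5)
Parts (1) and (2) come straight from the structure of $\WH(V)$ together with Lemma \ref{lemma:intervals_defined_by_an_element}; the real content is in part (3), where the two identities are the two Möbius recursions \eqref{equation:mobius_recursion_1} and \eqref{equation:mobius_recursion_2}, each summed against the weights $t^{w}$.

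For (1): if $H=\emptyset$, the only weighted hypergraph supported on $H$ is $\pmb{\hat 0}$. All its weights are $0$, so $M_\emptyset(t)=\mu(\pmb{\hat0},\pmb{\hat0})=1$.

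For (2): suppose $H$ has components $H_1,\dots,H_k$ with $H_{\max}=\{I_1,\dots,I_k\}$. A weighted hypergraph supported on $H$ is the same thing as a choice of weights $w(I_j)\in\{0,\dots,|I_j|-1\}$ for each $j$. By \eqref{equation:lemma_intervals_eq2}, the interval $[\pmb{\hat0},\pmb H]$ is the product of the intervals $[\pmb{\hat0},H|_{I_j}^{w(I_j)}]$, and Möbius functions are multiplicative over products of posets. The monomial $t^{w_{\pmb H}}$ also factors as $\prod_j t^{w(I_j)}$. Summing over independent choices of the $w(I_j)$ then gives $M_H=\prod_j M_{H_j}$. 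Here $M_{H_j}$ is computed in $\WH(I_j)$, and the product decomposition identifies these intervals with the corresponding ones in $\WH(V)$, since singleton blocks only carry weight $0$.

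For \eqref{equation:M_lemma_1}: let $H$ be connected with $|V|>1$. For each $i$ apply \eqref{equation:mobius_recursion_1} to $\mu(\pmb{\hat0},H^i)$, giving $\sum_{\pmb{H'}\le H^i}\mu(\pmb{\hat0},\pmb{H'})=0$; the hypothesis $|V|>1$ is needed here so that $H^i\ne\pmb{\hat0}$. Multiply by $t^i$ and sum over $i=0,\dots,|V|-1$, then reorganize by the element $\pmb{H'}=(H',w')$ with $H'\le H$. By Definition \ref{definition:poset_of_weighted_hypergraphs}, $\pmb{H'}\le H^i$ exactly when $i=\mu+\sum_{I\in H'_{\max}}w'(I)$ with $0\le\mu<\kappa(H')$. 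The coefficient of $\mu(\pmb{\hat0},\pmb{H'})$ is therefore $t^{w'}(1+t+\dots+t^{\kappa(H')-1})=t^{w'}[\kappa(H')]_t$. One must check that every such $i$ satisfies $i\le|V|-1$, which holds because $\sum w'(I)\le\sum(|I|-1)=|V|-\kappa(H')$. Summing over $w'$ for fixed $H'$ gives $[\kappa(H')]_tM_{H'}(t)$, which is \eqref{equation:M_lemma_1}.

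For \eqref{equation:M_lemma_2}: apply instead \eqref{equation:mobius_recursion_2}, $\sum_{\pmb{\hat0}\le\pmb{H'}\le H^i}\mu(\pmb{H'},H^i)=0$, multiply by $t^i$ and sum over $i$. By \eqref{equation:lemma_intervals_eq1}, $\mu(\pmb{H'},H^i)=\mu_{\WH(H/_{H'})}(\pmb{\hat0},\cdot)$, evaluated at the image of $H^i$ in $\WH(H/_{H'})$. The key step, and the main obstacle, is making this isomorphism explicit on weights. I expect the image of $H^i$ to be the top element of $\WH(H/_{H'})$ with weight $\mu=i-\sum_I w'(I)$. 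Then for fixed $H'$ the double sum over $w'$ and $\mu$ factors as
\[\Big(\prod_{I\in H'_{\max}}\sum_{w'(I)=0}^{|I|-1}t^{w'(I)}\Big)\Big(\sum_{\mu}\mu(\hat0,(H/_{H'})^{\mu})t^{\mu}\Big)=\prod_{I\in H'_{\max}}[|I|]_t\,M_{H/_{H'}}(t).\]
This uses that $H/_{H'}$ is connected on $\kappa(H')$ vertices, so $\mu$ ranges over $0,\dots,\kappa(H')-1$, independently of $w'$; the constraint $0\le\mu<k$ in \eqref{equation:definition_poset_weighted_hypergraphs} is exactly the weight condition in the contraction. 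One should check that the filter isomorphism shifts weights in precisely this way. Two edge cases also need care: when $H'=H$, the contraction has a single vertex and $M=1$; when $H'=\emptyset$, the term is $M_H$ itself.
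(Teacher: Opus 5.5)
Your proposal is correct and follows the paper's proof essentially step for step. Parts (1) and (2) come from the one-point interval and from the product decomposition \eqref{equation:lemma_intervals_eq2} together with multiplicativity of $\mu$. Part (3) comes from weighting the two M\"obius recursions on the maximal intervals $[\pmb{\hat 0},H^i]$ by $t^i$ and regrouping with $\pmb{H'}\le H^i \iff i=w_{\pmb{H'}}+j$, $0\le j<\kappa(H')$; for the second identity this is combined with the filter isomorphism \eqref{equation:lemma_intervals_eq1}.

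The details you flag are the ones the paper leaves implicit, and they do hold: $i\le |V|-1$, $H/_{H'}$ is connected, and the filter isomorphism sends $H^i$ to $(H/_{H'})^{\,i-\sum_I w_{\pmb{H'}}(I)}$. The last point follows by subtracting $\sum_I w_{\pmb{H'}}(I)$ in the defining weight relation.
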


\begin{proof} (1) follows directly from the value of the M\"obius function on a poset in a single point.  The proof of (2) follows using \eqref{equation:lemma_intervals_eq2} and the multiplicative nature of the M\"obius function (see for example \cite{Wachs2007}).

For (3) we use the recursive definition of the M\"obius function in equations \eqref{equation:mobius_recursion_1} and \eqref{equation:mobius_recursion_2}. Since $H$ is connected, the maximal intervals of $\WH(H)$ are $[\pmb{\hat{0}},H^i], i=0,\dots, |V|-1$. The  recursive definitions of the M\"obius function in these interval take the form 
 \begin{align}
     \sum_{\pmb{H'}\in [\pmb{\hat{0}},H^i]}\mu(\pmb{\hat{0}},\pmb{H'})&=0,\label{equation:recursive_interval_1}\\
     \sum_{\pmb{H'}\in [\pmb{\hat{0}},H^i]}\mu(\pmb{H'},H^i)&=0.\label{equation:recursive_interval_2}
 \end{align}
Note also that iterated applications of \eqref{equation:definition_poset_weighted_hypergraphs} imply that for any given  $\pmb{H'}=(H',w_{\pmb{H'}})$ in $ \WH(H)$  we have that $\pmb{H'}\leq H^i$ if and only if $i=w_{\pmb{H'}}+j$ where $j=0,\dots,\kappa(H')-1$.
To prove \eqref{equation:M_lemma_1},
we combine  \eqref{equation:recursive_interval_1} for all $i=0,\dots, |V|-1$, interchange the order of summation, and use the fact above
to see that 
 \begin{align*}
0 =     \sum_{i=0}^ {|V|-1}t^i\sum_{\pmb{H'}\in [\pmb{\hat{0}},H^i]}\mu(\pmb{\hat{0}},\pmb{H'}) &= \sum_{\pmb{H'}\in \WH(H)}\mu(\pmb{\hat{0}},\pmb{H'})\sum_{j=0}^ {\kappa(H')-1}t^{w_{\pmb{H'}}+j} \\
 &=   \sum_{\pmb{H'}\in \WH(H)}\mu(\pmb{\hat{0}},\pmb{H'})t^{w_{\pmb{H'}}}[\kappa(H')]_t.
 \end{align*} 
 
Sorting the indexes $(H',w_{\pmb{H'}})$ of this sum, first by the spanning subhypergraphs $H'\leq H$, and then by a valid weight $w_{\pmb{H'}}$, we obtain \begin{align*}
0 = \sum_{H'\leq H}
    \sum_{w_{\pmb{H'}}} \mu(\pmb{\hat{0}},\pmb{H'})t^{w_{\pmb{H'}}}[\kappa(H')]_t &= 
     \sum_{H'\leq H}[\kappa(H')]_t \sum_{w_{\pmb{H'}}} \mu(\pmb{\hat{0}},\pmb{H'})t^{w_{\pmb{H'}}} \\
&= \sum_{H'\leq H}[\kappa(H')]_t M_{H'}(t),
 \end{align*} as required.

In a similar fashion, to establish \eqref{equation:M_lemma_2}, we combine  \eqref{equation:recursive_interval_2} for all $i=0,\dots, |V|-1$ to get 
   \begin{align*}   0=\sum_{i=0}^{|V|-1}t^i\sum_{\pmb{H'}\in [\pmb{\hat{0}},H^i]}\mu(\pmb{H'},H^i)=  \sum_{\pmb{H'}\in \WH(H)}t^{w_{\pmb{H'}}}\sum_{j=0}^{\kappa(H')-1} \mu(\pmb{H'},H^{w_{\pmb{H'}}+j})t^{j}.
 \end{align*}

Once again, sorting first by spanning subhypergraphs and then by valid weights, we find  \begin{align*}
0 = \sum_{H'\leq H}
    \sum_{ w_{\pmb{H'}}} t^{w_{\pmb{H'}}}\sum_{j=0}^{\kappa(H')-1}\mu(\pmb{H'},H^{w_{\pmb{H'}}+j})t^{j} &=  \sum_{H'\leq H}
    \sum_{ w_{\pmb{H'}}}  t^{w_{\pmb{H'}}} M_{H/_{H'}}(t) \\
& = \sum_{H'\leq H} M_{H/_{H'}}(t) \sum_{ w_{\pmb{H'}}} t^{w_{\pmb{H'}}}\\
& = \sum_{H'\leq H}M_{H/_{H'}}(t)\prod_{I \in H'_{\max}}[|I|]_t,
 \end{align*} as needed.\end{proof}

\section{On \texorpdfstring{$\B$}{}-forests and \texorpdfstring{$H$}{}-forests}\label{section:B_forests}

It was shown in \cite{PostnikovReinerWilliams2007}
that $h$-polynomials of nestohedra admit a  combinatorial interpretation in terms of a family of rooted forests introduced in \cite{Postnikov2009}, known as $\B$-forests. In this section, we recall this notion in the language of hypergraphs. Recall that the collection of subsets of $V$ that are connected in a hypergraph $H$ on $V$ forms a building set $\B(H)$, and that this map is not injective. For a hypergraph $H$ the notion of $H$-forest below coincides with the notion of $\B(H)$-forest in \cite{Postnikov2009}. These objects are also in bijective correspondence with \textit{maximal nested sets} in \cite{DeConciniProcesi1995} and with \textit{constructions} in \cite{DosenPetric2011}.

Recall that a \emph{rooted forest} is a forest in which a distinguished node (a \emph{root}) is selected in each connected component (tree). For a rooted forest $F$, given two nodes $u$ and $v$ of $F$, we say that $u$ is a \textit{descendant} of $v$ if $v$ belongs to the shortest path connecting $u$ with the root of its connected component. We denote the set of descendants of $v$ in $F$ by $F_{\leq v}$. Note that  $v\in F_{\leq v}$. Moreover, if $F$ has roots $r_1,\dots,r_M$, the sets of vertices of the connected components of $F$ form the partition $F_{\max}:=\{F_{\leq_{r_1}},\dots,F_{\leq_{r_M}}\}$ of $V$.

A descendant $u$ of $v$ is called a \textit{child} of $v$ if $u$ is a descendant of $v$ and $(u,v)\in E(F)$. Finally, we say that $u$ and $v$ are \emph{incomparable} in $F$ if neither $u$ is a descendant of $v$, nor $v$ is a descendant of $u$.

\begin{definition}[c.f. Definition 7.7,  \cite{Postnikov2009}] \label{definition:B_forest} 
 A rooted forest $F$ on $V$ is an \textit{$H$-forest} for a hypergraph $H$ on $V$ when 
	\begin{enumerate}[label=(\textbf{HF\arabic*})]

\item\label{HF1} For every $v\in V$, $H|_{F_{\leq v}}$ is connected.
		
\item\label{HF2} For any incomparable nodes $u_1,\dots,u_k$ in $F$, $k\geq 2$,  $H|_{\bigcup_{j=1}^{k} F_{\leq u_j}}$ is not connected.
        
\item\label{HF3} We have that  $F_{\max}=H_{\max}$.

	\end{enumerate}
\end{definition}

The connected components of an $H$-forest are called \textit{$H$-trees}, each of which is associated with a connected component of $H$.

\begin{remark} Given a hypergraph $H$, the previous conditions can be expressed in terms of the corresponding building set $\B(H)$ as follows. \ref{HF1} asks that $F_{\leq v}\in\B(H)$, for all $v\in V$. \ref{HF2} means that $\bigcup_{j=1}^{k} F_{\leq u_j} \not\in \B(H)$, for incomparable nodes $u_1,\dots,u_k$. Finally, \ref{HF3} says that $F_{\max}=\B(H)_{\max}$.

This shows that if $\B=\B(H_1)=\B(H_2)$ for hypergraphs $H_1$ and $H_2$, the $H_1$-forests and $H_2$-forests are the same, and are simply the $\B$-forests defined in \cite{Postnikov2009}. 
\end{remark}

The following is a recursive description of $H$-trees originally presented in \cite{Postnikov2009} in terms of $\B(H)$-trees. 

\begin{proposition}[cf. Proposition 8.5, \cite{PostnikovReinerWilliams2007}] \label{proposition:B_forests}
Consider a connected hypergraph $H$ on $V$, $v\in V$, and let $H_1,\dots,H_k$ be the connected components of the restricted hypergraph $H|_{V\setminus v }$. Then all the $H$-trees $T$ with root $v$ are obtained recursively by connecting to $v$ the roots of $H_i$-trees $T_i$ for $i=1,\dots, k$.
\end{proposition}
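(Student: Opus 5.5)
We prove the statement by translating the recursive description of $\B$-trees from \cite[Proposition 8.5]{PostnikovReinerWilliams2007} into the language of hypergraphs, using the Remark above. By that Remark, $H$-forests depend only on $\B:=\B(H)$. Conditions \ref{HF1}--\ref{HF3} say exactly that $F_{\le v}\in\B$ for all $v$, that unions of two or more incomparable subtrees are not in $\B$, and that $F_{\max}=\B_{\max}$. Since $H$ is connected, $\B_{\max}=\{V\}$, so an $H$-forest is a single tree. Also, restriction commutes with taking building sets, $\B(H|_I)=\B(H)|_I$, because connectivity of $J\subseteq I$ by $H|_I$ is the same as connectivity by $H$. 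Hence the connected components $H_1,\dots,H_k$ of $H|_{V\setminus v}$ correspond to the components of $\B|_{V\setminus v}$, and the statement becomes Postnikov's recursive description for $\B$. For a self-contained argument we verify both directions directly.

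\emph{Forward direction.} Let $T$ be an $H$-tree with root $v$, and let $c_1,\dots,c_m$ be the children of $v$. The sets $T_{\le c_j}$ are pairwise disjoint and partition $V\setminus v$. By \ref{HF1} each $T_{\le c_j}$ is connected by $H$, hence by $H|_{V\setminus v}$. By \ref{HF2} no union of two or more of them is connected, since the $c_j$ are pairwise incomparable. A partition of $V\setminus v$ into connected blocks with no connected union of several blocks must be the partition into connected components: each component is a union of blocks, and being connected forces that union to be a single block. So $m=k$ and, after relabeling, $T_{\le c_j}=V(H_j)$. Each subtree $T_j:=T|_{T_{\le c_j}}$ then satisfies \ref{HF1} and \ref{HF2} for $H_j$, because descendant sets and incomparability are inherited, and $H_j|_{S}=H|_S$ for $S\subseteq V(H_j)$. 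Condition \ref{HF3} holds because $(T_j)_{\max}=\{V(H_j)\}$ and $H_j$ is connected.

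\emph{Converse direction.} Given $H_j$-trees $T_j$, attach their roots to $v$ to form $T$. Condition \ref{HF1} holds at $v$ because $H$ is connected, and at other nodes it is inherited from the $T_j$. Condition \ref{HF3} holds since $T_{\max}=\{V\}=H_{\max}$. The main obstacle is \ref{HF2}, where the incomparable nodes $u_1,\dots,u_r$ may lie in different $T_j$; note that $v$ is comparable to every node, so it is never among them. Group the $u_i$ by the tree $T_j$ containing them and let $U=\bigcup_i T_{\le u_i}$, and suppose $H|_U$ were connected.
\begin{itemize}
\item If the $u_i$ meet at least two distinct $T_j$, then $U$ is a subset of $V\setminus v$ meeting two different components of $H|_{V\setminus v}$. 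Any hyperedge of $H|_U$ lies in $V\setminus v$ and therefore inside a single component, so $U$ separates along the components, a contradiction.
\item If all $u_i$ lie in a single $T_j$, then $H|_U=H_j|_U$, and disconnectedness follows from \ref{HF2} for $T_j$ since $r\ge2$.
\end{itemize}
The first case is the delicate point. It uses the key observation that a subset of $V\setminus v$ is connected by $H$ only if it lies within one component of $H|_{V\setminus v}$.
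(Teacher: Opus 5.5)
Your proof is correct. The paper gives no independent argument for this proposition. It states the result as a translation of \cite[Proposition 8.5]{PostnikovReinerWilliams2007}, justified by the Remark after Definition~\ref{definition:B_forest} that $H$-forests are exactly the $\B(H)$-forests. Your first paragraph is precisely this reduction, and it correctly supplies the identity $\B(H|_I)=\B(H)|_I$, which is needed to match the components of $H|_{V\setminus v}$ with those of $\B(H)|_{V\setminus v}$.

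Your direct verification goes beyond the paper, and it checks out:
\begin{itemize}
\item In the forward direction, \ref{HF1} and \ref{HF2} applied to the children of $v$ force the child subtrees to be exactly the components of $H|_{V\setminus v}$.
\item In the converse, the only nontrivial case of \ref{HF2} is incomparable nodes spread over several $T_j$. You handle it correctly by observing that every hyperedge of $H|_{V\setminus v}$ lies inside a single component.
\end{itemize}
Both implicit facts you use follow from $H_{\max}$ being a partition of $V$ and $\B(H)$ being a building set: a connected subset lies in one component, and each hyperedge lies in one component.

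What your route buys is a proof native to the hypergraph setting that does not depend on the building-set formalism of \cite{PostnikovReinerWilliams2007}. The paper's citation is shorter but leans on the external reference together with the Remark.
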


Therefore, moving through all the possible values of $v\in V$ in Proposition \ref{proposition:B_forests} we obtain all $H$-trees. When $H$ is not connected, we apply this process to each of its connected components to construct all $H$-forests of $H$.

\begin{example} Consider the hypergraph $H$ and the tree $T$ given in Figure \ref{figure:B_tree_recursively}. By taking $v=1$, $H$ restricted to $\{2,\dots, 8\}$ leaves a hypergraph with three connected components $H_1 = \{(\{2,3\},i), (\{2,3\},ii)\}$, $H_2 = \{\{4,6\}$, $\{6\}\}$ and $H_3 = \{\{5,7\},\{5,7,8\}, (\{7\},a), (\{7\},b)\}$. Removing $v=1$ from $T$ gives a forest with three trees. The reader can verify that these trees correspond to $H_j$-trees for each of $H_j, j=1,2,3$. Therefore, $T$ is an $H$-tree.

\begin{figure}[ht]
    \centering
    \captionsetup{justification=centering}
    \resizebox{0.6\linewidth}{!}{\definecolor{color_0}{HTML}{FFFFFF}
\definecolor{color_1}{HTML}{000000}
\definecolor{color_2}{HTML}{000000}
\begin{tikzpicture}[main_node_t/.style={circle,fill=black,minimum size=0.5em,inner sep=1pt]}, main_node_g/.style={circle,inner sep=2pt,draw = black, line width=0.1mm}]

\node () at (0,2.5) {$H$};
\node () at (3.2,2.5) {$T$};

\node[main_node_g, fill = color_0,text = color_1,minimum size=0.561em] (0) at (-0.06,0.95) {\tiny \textbf{1}};
\node[main_node_g, fill = color_0,text = color_1,minimum size=0.561em] (1) at (-0.65,1.18) {\tiny \textbf{2}};
\node[main_node_g, fill = color_0,text = color_1,minimum size=0.561em] (2) at (-0.03,1.90) {\tiny \textbf{3}};
\node[main_node_g, fill = color_0,text = color_1,minimum size=0.561em] (3) at (-1.10,0.54) {\tiny \textbf{4}};
\node[main_node_g, fill = color_0,text = color_1,minimum size=0.561em] (4) at (-0.30,0.14) {\tiny \textbf{5}};
\node[main_node_g, fill = color_0,text = color_1,minimum size=0.561em] (5) at (-2.08,0.54) {\tiny \textbf{6}};
\node[main_node_g, fill = color_0,text = color_1,minimum size=0.561em] (6) at (-0.26,-0.82) {\tiny \textbf{7}};
\node[main_node_g, fill = color_0,text = color_1,minimum size=0.561em] (7) at (0.49,-0.44) {\tiny \textbf{8}};

\begin{pgfonlayer}{background}
\fill[fill = color_2, opacity = 0.2, line width = 0.1mm, draw = color_2, solid]
(-0.75,0.94)--(-0.16,0.71)--(-0.14,0.71)--(-0.13,0.70)--(-0.11,0.70)--(-0.10,0.70)--(-0.08,0.70)--(-0.06,0.70)--(-0.05,0.70)--(-0.03,0.70)--(-0.02,0.70)--(0.00,0.70)--(0.02,0.71)--(0.03,0.71)--(0.05,0.72)--(0.06,0.73)--(0.07,0.74)--(0.09,0.74)--(0.10,0.75)--(0.11,0.77)--(0.12,0.78)--(0.13,0.79)--(0.14,0.80)--(0.15,0.81)--(0.16,0.83)--(0.17,0.84)--(0.18,0.86)--(0.18,0.87)--(0.19,0.89)--(0.19,0.90)--(0.19,0.92)--(0.19,0.94)--(0.23,1.89)--(0.23,1.90)--(0.23,1.92)--(0.23,1.94)--(0.22,1.95)--(0.22,1.97)--(0.22,1.98)--(0.21,2.00)--(0.20,2.01)--(0.20,2.03)--(0.19,2.04)--(0.18,2.06)--(0.17,2.07)--(0.16,2.08)--(0.15,2.09)--(0.14,2.10)--(0.12,2.11)--(0.11,2.12)--(0.10,2.13)--(0.08,2.14)--(0.07,2.14)--(0.05,2.15)--(0.04,2.15)--(0.02,2.16)--(0.00,2.16)--(-0.01,2.16)--(-0.03,2.16)--(-0.04,2.16)--(-0.06,2.16)--(-0.08,2.16)--(-0.09,2.15)--(-0.11,2.15)--(-0.12,2.14)--(-0.14,2.14)--(-0.15,2.13)--(-0.17,2.12)--(-0.18,2.11)--(-0.19,2.10)--(-0.20,2.09)--(-0.22,2.08)--(-0.23,2.07)--(-0.85,1.35)--(-0.86,1.33)--(-0.87,1.32)--(-0.88,1.31)--(-0.89,1.29)--(-0.89,1.28)--(-0.90,1.26)--(-0.90,1.25)--(-0.91,1.23)--(-0.91,1.22)--(-0.91,1.20)--(-0.91,1.18)--(-0.91,1.17)--(-0.91,1.15)--(-0.91,1.13)--(-0.90,1.12)--(-0.90,1.10)--(-0.89,1.09)--(-0.89,1.07)--(-0.88,1.06)--(-0.87,1.05)--(-0.86,1.03)--(-0.85,1.02)--(-0.84,1.01)--(-0.83,1.00)--(-0.82,0.98)--(-0.81,0.98)--(-0.79,0.97)--(-0.78,0.96)--(-0.76,0.95)--cycle;
\end{pgfonlayer}

\begin{pgfonlayer}{background}
\fill[fill = color_2, opacity = 0.2, line width = 0.1mm, draw = color_2, solid]
(0.06,1.28)--(-0.53,1.51)--(-0.55,1.51)--(-0.57,1.52)--(-0.59,1.52)--(-0.61,1.53)--(-0.63,1.53)--(-0.65,1.53)--(-0.68,1.53)--(-0.70,1.53)--(-0.72,1.52)--(-0.74,1.52)--(-0.76,1.51)--(-0.78,1.51)--(-0.80,1.50)--(-0.82,1.49)--(-0.84,1.48)--(-0.86,1.46)--(-0.88,1.45)--(-0.89,1.44)--(-0.91,1.42)--(-0.92,1.40)--(-0.94,1.39)--(-0.95,1.37)--(-0.96,1.35)--(-1.32,0.67)--(-1.33,0.65)--(-1.34,0.64)--(-1.34,0.62)--(-1.35,0.61)--(-1.35,0.59)--(-1.35,0.57)--(-1.36,0.56)--(-1.36,0.54)--(-1.36,0.53)--(-1.35,0.51)--(-1.35,0.49)--(-1.35,0.48)--(-1.34,0.46)--(-1.34,0.45)--(-1.33,0.43)--(-1.32,0.42)--(-1.32,0.40)--(-1.31,0.39)--(-1.30,0.38)--(-1.29,0.37)--(-1.28,0.36)--(-1.26,0.34)--(-1.25,0.33)--(-1.24,0.33)--(-1.22,0.32)--(-1.21,0.31)--(-1.19,0.30)--(-1.18,0.30)--(-1.16,0.29)--(-1.15,0.29)--(-1.13,0.29)--(-1.12,0.29)--(-1.10,0.29)--(-1.08,0.29)--(-1.07,0.29)--(-1.05,0.29)--(-1.04,0.29)--(-1.02,0.30)--(0.04,0.62)--(0.06,0.63)--(0.08,0.64)--(0.10,0.65)--(0.12,0.66)--(0.14,0.67)--(0.16,0.68)--(0.17,0.70)--(0.19,0.71)--(0.20,0.73)--(0.22,0.75)--(0.23,0.77)--(0.24,0.78)--(0.25,0.80)--(0.26,0.82)--(0.27,0.85)--(0.27,0.87)--(0.28,0.89)--(0.28,0.91)--(0.28,0.93)--(0.28,0.95)--(0.28,0.97)--(0.28,1.00)--(0.28,1.02)--(0.27,1.04)--(0.27,1.06)--(0.26,1.08)--(0.25,1.10)--(0.24,1.12)--(0.23,1.14)--(0.22,1.16)--(0.20,1.17)--(0.19,1.19)--(0.17,1.21)--(0.16,1.22)--(0.14,1.23)--(0.12,1.25)--(0.10,1.26)--(0.08,1.27)--cycle;
\end{pgfonlayer}

\begin{pgfonlayer}{background}
\fill[fill = color_2, opacity = 0.2, line width = 0.1mm, draw = color_2, solid]
(-0.56,0.14)--(-0.56,0.13)--(-0.52,-0.84)--(-0.52,-0.86)--(-0.52,-0.87)--(-0.51,-0.89)--(-0.51,-0.90)--(-0.50,-0.92)--(-0.50,-0.93)--(-0.49,-0.95)--(-0.48,-0.96)--(-0.47,-0.97)--(-0.46,-0.99)--(-0.45,-1.00)--(-0.44,-1.01)--(-0.43,-1.02)--(-0.41,-1.03)--(-0.40,-1.04)--(-0.39,-1.05)--(-0.37,-1.06)--(-0.36,-1.06)--(-0.34,-1.07)--(-0.33,-1.07)--(-0.31,-1.08)--(-0.30,-1.08)--(-0.28,-1.08)--(-0.26,-1.08)--(-0.25,-1.08)--(-0.23,-1.08)--(-0.22,-1.08)--(-0.20,-1.07)--(-0.18,-1.07)--(-0.17,-1.06)--(-0.15,-1.06)--(-0.14,-1.05)--(0.61,-0.66)--(0.63,-0.65)--(0.64,-0.64)--(0.65,-0.63)--(0.67,-0.62)--(0.68,-0.61)--(0.69,-0.60)--(0.70,-0.59)--(0.71,-0.57)--(0.71,-0.56)--(0.72,-0.55)--(0.73,-0.53)--(0.73,-0.52)--(0.74,-0.50)--(0.74,-0.48)--(0.74,-0.47)--(0.75,-0.45)--(0.75,-0.44)--(0.75,-0.42)--(0.74,-0.40)--(0.74,-0.39)--(0.74,-0.37)--(0.73,-0.36)--(0.73,-0.34)--(0.72,-0.33)--(0.71,-0.31)--(0.71,-0.30)--(0.70,-0.28)--(0.69,-0.27)--(0.68,-0.26)--(0.67,-0.25)--(0.65,-0.24)--(0.64,-0.23)--(-0.15,0.35)--(-0.17,0.36)--(-0.18,0.37)--(-0.20,0.37)--(-0.21,0.38)--(-0.23,0.39)--(-0.24,0.39)--(-0.26,0.39)--(-0.27,0.40)--(-0.29,0.40)--(-0.30,0.40)--(-0.32,0.40)--(-0.34,0.40)--(-0.35,0.39)--(-0.37,0.39)--(-0.38,0.39)--(-0.40,0.38)--(-0.41,0.37)--(-0.43,0.37)--(-0.44,0.36)--(-0.46,0.35)--(-0.47,0.34)--(-0.48,0.33)--(-0.49,0.32)--(-0.50,0.31)--(-0.51,0.29)--(-0.52,0.28)--(-0.53,0.27)--(-0.54,0.25)--(-0.54,0.24)--(-0.55,0.22)--(-0.55,0.21)--(-0.56,0.19)--(-0.56,0.17)--(-0.56,0.16)--cycle;
\end{pgfonlayer}

\begin{pgfonlayer}{background}
\fill[fill = color_2, opacity = 0.2, line width = 0.1mm, draw = color_2, solid]
(-1.25,0.23)--(-0.45,-0.17)--(-0.43,-0.18)--(-0.41,-0.19)--(-0.39,-0.19)--(-0.37,-0.20)--(-0.35,-0.20)--(-0.33,-0.20)--(-0.30,-0.21)--(-0.28,-0.20)--(-0.26,-0.20)--(-0.24,-0.20)--(-0.22,-0.19)--(-0.20,-0.19)--(-0.18,-0.18)--(-0.16,-0.17)--(-0.14,-0.16)--(-0.12,-0.15)--(-0.10,-0.14)--(-0.08,-0.13)--(-0.07,-0.11)--(-0.05,-0.10)--(-0.04,-0.08)--(-0.02,-0.06)--(-0.01,-0.04)--(0.00,-0.03)--(0.01,-0.01)--(0.02,0.01)--(0.34,0.79)--(0.35,0.82)--(0.36,0.84)--(0.37,0.87)--(0.37,0.90)--(0.37,0.92)--(0.37,0.95)--(0.37,0.98)--(0.37,1.01)--(0.37,1.03)--(0.36,1.06)--(0.35,1.09)--(0.34,1.11)--(0.33,1.14)--(0.32,1.16)--(0.31,1.19)--(0.29,1.21)--(0.27,1.23)--(0.26,1.25)--(0.24,1.27)--(0.22,1.29)--(0.19,1.31)--(0.17,1.32)--(0.15,1.34)--(0.12,1.35)--(0.10,1.36)--(-0.49,1.59)--(-0.52,1.60)--(-0.55,1.61)--(-0.57,1.61)--(-0.60,1.62)--(-0.63,1.62)--(-0.65,1.62)--(-0.68,1.62)--(-0.71,1.62)--(-0.74,1.61)--(-0.76,1.61)--(-0.79,1.60)--(-0.82,1.59)--(-0.84,1.58)--(-0.87,1.57)--(-0.89,1.55)--(-0.91,1.54)--(-0.93,1.52)--(-0.95,1.50)--(-0.97,1.48)--(-0.99,1.46)--(-1.01,1.44)--(-1.02,1.42)--(-1.04,1.39)--(-1.40,0.71)--(-1.41,0.69)--(-1.42,0.67)--(-1.43,0.65)--(-1.44,0.63)--(-1.44,0.61)--(-1.44,0.59)--(-1.45,0.56)--(-1.45,0.54)--(-1.45,0.52)--(-1.44,0.50)--(-1.44,0.48)--(-1.44,0.46)--(-1.43,0.44)--(-1.42,0.41)--(-1.41,0.39)--(-1.40,0.38)--(-1.39,0.36)--(-1.38,0.34)--(-1.37,0.32)--(-1.35,0.30)--(-1.34,0.29)--(-1.32,0.27)--(-1.30,0.26)--(-1.29,0.25)--(-1.27,0.24)--cycle;
\end{pgfonlayer}

\path[draw  = color_2 ,thick, solid](0) edge node {} (4);

\begin{pgfonlayer}{background}
\path[draw = color_2, thick, solid](-0.65,1.18) .. controls (-0.52,1.49) and (-0.31,1.73) .. (-0.03,1.90);
\end{pgfonlayer}

\begin{pgfonlayer}{background}
\path[draw = color_2, thick, solid](-0.65,1.18) .. controls (-0.37,1.36) and (-0.17,1.60) .. (-0.03,1.90);
\end{pgfonlayer}

\path[draw  = color_2 ,thick, solid](3) edge node {} (5);

\path[draw  = color_2 ,thick, solid](4) edge node {} (6);

\begin{pgfonlayer}{background}
\path[draw = color_2, thick, solid](-2.32,0.54) ellipse [x radius=0.166, y radius=0.166];
\end{pgfonlayer}

\begin{pgfonlayer}{background}
\path[draw = color_2, thick, solid](-0.22,-1.06) ellipse [x radius=0.166, y radius=0.166];
\end{pgfonlayer}

\begin{pgfonlayer}{background}
\path[draw = color_2, thick, solid](-0.20,-1.18) ellipse [x radius=0.291, y radius=0.291];
\end{pgfonlayer}

%% -----------------------------------------------------------------------------------------

\node at (2.8142857142857145 + 0.1,1.4714285714285713) {$1$};
\node[main_node_t] (0) at (3.2142857142857144,1.4714285714285713) {};
\node at (1.6714285714285717 + 0.1,0.3285714285714285) {$2$};
\node[main_node_t] (1) at (2.0714285714285716,0.3285714285714285) {};
\node at (2.8142857142857145 + 0.1,0.3285714285714285) {$6$};
\node[main_node_t] (2) at (3.2142857142857144,0.3285714285714285) {};
\node at (3.957142857142857 + 0.1,0.3285714285714285) {$7$};
\node[main_node_t] (3) at (4.357142857142857,0.3285714285714285) {};
\node at (1.6714285714285717 + 0.1,-0.8142857142857142) {$3$};
\node[main_node_t] (4) at (2.0714285714285716,-0.8142857142857142) {};
\node at (2.8142857142857145 + 0.1,-0.8142857142857142) {$4$};
\node[main_node_t] (5) at (3.2142857142857144,-0.8142857142857142) {};
\node at (3.957142857142857 + 0.1,-0.8142857142857142) {$5$};
\node[main_node_t] (6) at (4.357142857142857,-0.8142857142857142) {};
\node at (5.1 + 0.1,-0.8142857142857142) {$8$};
\node[main_node_t] (7) at (5.5,-0.8142857142857142) {};

\draw [->] (0) -- (1);
\draw [->] (0) -- (2);
\draw [->] (0) -- (3);
\draw [->] (1) -- (4);
\draw [->] (2) -- (5);
\draw [->] (3) -- (6);
\draw [->] (3) -- (7);

\end{tikzpicture}}
    \caption{An example of an  $H$-tree $T$.}
    \label{figure:B_tree_recursively}
\end{figure}    
\end{example}

For the arguments below, we restate Proposition \ref{proposition:B_forests}
in the following form.

\begin{lemma}\label{lemma:H_Forest_removing_root}
Let $H$ be a hypergraph and $F$ a rooted forest on $V$. Then $F$ is an $H$-forest if and only if  $F_{\max}=H_{\max}$ and for a root $r$ of $F$,   $F|_{V\setminus r}$ is an $H|_{V\setminus r}$-forest.
\end{lemma}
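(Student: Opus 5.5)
The plan is to argue directly from Definition \ref{definition:B_forest}, reading $F|_{V\setminus r}$ as the rooted forest obtained by deleting $r$. In it the children of $r$ become new roots and all other roots are kept. The key observation is that for every $v\neq r$ the descendant set $F_{\le v}$ is the same in $F$ and in $F|_{V\setminus r}$. Moreover, for $I\subseteq V\setminus r$ we have $(H|_{V\setminus r})|_I=H|_I$, so connectivity of such sets does not depend on which of the two hypergraphs we use. We will also use repeatedly that the sets connected by a hypergraph form the building set $\B(H)$. Hence two connected sets that intersect have connected union (property \ref{B1}), and every connected set lies inside a single block of $H_{\max}$. Let $T$ be the tree of $F$ containing $r$, so $F_{\le r}=V(T)$.

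\textbf{Forward direction.} Assume $F$ is an $H$-forest. Then $F_{\max}=H_{\max}$ by \ref{HF3}. Conditions \ref{HF1} and \ref{HF2} for $F|_{V\setminus r}$ follow at once from those for $F$: the descendant sets are unchanged, and nodes incomparable in $F|_{V\setminus r}$ are still incomparable in $F$. The main step is \ref{HF3} for $F|_{V\setminus r}$, namely that its trees are exactly the connected components of $H|_{V\setminus r}$.

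\begin{itemize}
\item Trees of $F$ other than $T$ are blocks of $H_{\max}$ that avoid $r$, so they remain components of $H|_{V\setminus r}$.
\item Inside $V(T)\setminus r$, take a component $C$ of $H|_{V(T)\setminus r}$. Each subtree $F_{\le u}$ at a child $u$ of $r$ is connected by \ref{HF1}, so it lies entirely in one component. Hence $C$ is a disjoint union of full subtrees $F_{\le u_1},\dots,F_{\le u_k}$.
\item Since $C$ is connected, \ref{HF2} applied to the incomparable nodes $u_1,\dots,u_k$ forces $k=1$.
\end{itemize}

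Thus the components of $H|_{V\setminus r}$ coincide with the trees of $F|_{V\setminus r}$. I expect this step, ruling out a component that spans several child subtrees, to be the only real obstacle, and the argument above handles it.

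\textbf{Reverse direction.} Assume $F_{\max}=H_{\max}$ and $F|_{V\setminus r}$ is an $H|_{V\setminus r}$-forest. Condition \ref{HF3} for $F$ is given.

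\begin{itemize}
\item \ref{HF1} holds for $v\neq r$ because descendant sets are unchanged. For $v=r$, $F_{\le r}=V(T)$ is a block of $F_{\max}=H_{\max}$, hence connected.
\item For \ref{HF2}, take incomparable nodes $u_1,\dots,u_k$ in $F$. None of them is $r$, since $r$ is comparable to every node of $T$, and a node in another tree has its own tree's root as ancestor.
\item If all the $u_j$ lie in one tree of $F$, they remain incomparable in $F|_{V\setminus r}$, and \ref{HF2} there applies.
\item If they lie in at least two trees of $F$, their union meets at least two distinct blocks of $H_{\max}$ and so cannot be connected.
\end{itemize}

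This proves the equivalence. The same argument applies to whichever root $r$ is chosen.
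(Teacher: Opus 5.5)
Your argument is correct, but it takes a different route from the paper. The paper gives no separate proof. It presents the lemma as a restatement of the recursive description of $H$-trees in Proposition~\ref{proposition:B_forests}, cited from Postnikov and Postnikov--Reiner--Williams: the $H$-trees with root $r$ are exactly those obtained by attaching to $r$ the roots of $H_i$-trees, where the $H_i$ are the components of $H|_{V\setminus r}$. This is applied to the tree of $F$ containing $r$, and the remaining trees are handled componentwise via $F_{\max}=H_{\max}$.

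You instead verify \ref{HF1}--\ref{HF3} directly. Your key step is that \ref{HF2}, applied to the pairwise incomparable children of $r$, forces each component of $H|_{F_{\le r}\setminus r}$ to be a single child subtree. That is exactly the content of the cited proposition, so in effect you reprove it from the axioms, using only property \ref{B1} of connected sets. What your version buys:
\begin{itemize}
\item It is self-contained.
\item It handles forests directly, including two facts the paper leaves tacit: an $H$-forest is the same as a choice of an $H|_I$-tree on each $I\in H_{\max}$, and \ref{HF2} across different trees is automatic once $F_{\max}=H_{\max}$.
\item It shows the forward implication for \emph{every} root $r$ and the reverse one for \emph{any} single root. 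That is the strongest reading of ``for a root $r$'', and it covers how the reverse direction is later used with a specific root.
\end{itemize}
The paper's version buys brevity by leaning on the literature.

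One sentence needs repair. In the reverse direction you assert that none of the incomparable nodes $u_1,\dots,u_k$ is $r$. This is false as stated: $r$ is incomparable with every node outside its own tree, for instance in the family $\{r,r'\}$ with $r'$ another root. The slip is harmless. Any family containing $r$ meets at least two trees of $F$, so your last case already covers it. In your one-tree case, $k\ge 2$ prevents any $u_j$ from being the root of that tree, so in particular $r$ is excluded there. Move the claim into the one-tree case and the proof is complete.
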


Given a hypergraph $H$ and $v\in V(H)$, we will denote by  $$H_v=\{\lset{I}\in H \ | \  v \in V(\lset{I})\}\subseteq H,$$ the set of hyperedges of $H$ containing  $v$. 

\begin{remark}\label{remark:Hv_non_empty}Note that $H=\bigcup_{v\in V(H)} H_v$. Moreover, if $|V| > 1$ and $H$ is connected, we see that $H_v \not= \emptyset$ for all $v\in V(H)$. \end{remark}

Furthermore, Proposition \ref{proposition:B_forests} shows that $H$-forests induce a natural decomposition of $H$ as follows.

\begin{lemma}\label{lemma:Btrees_struts}
Let $H$ be a hypergraph on $V$ and let $F$ be an $H$-forest with roots $r_0=r,\dots,r_m\in V$.
\begin{enumerate}
    \item \label{part_1_lemma_Btrees_struts} If $\lset{I}\in H$ and $V(\lset{I})\cap F_{\leq r}\neq \emptyset$, then $V(\lset{I})\subseteq F_{\leq r}$. In particular, $H_r\subseteq H|_{F_{\leq_r}}$, and  $H= H|_{F_{\leq_{r_0}}}\sqcup \dots \sqcup H|_{F_{\leq_{r_m}}}$, as a disjoint union.

    \item If $H$ is connected and  $u_1,\dots,u_l$ are the children of $r$ in $F$, then $$H = H_r\sqcup \bigsqcup_{i=1}^l H|_{F_{\leq_{u_i}}},$$ as a disjoint union.
\end{enumerate}
\end{lemma}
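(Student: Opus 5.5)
Both parts follow from the definition of an $H$-forest (Definition \ref{definition:B_forest}) and the recursive description in Lemma \ref{lemma:H_Forest_removing_root}. The key input is \ref{HF3}: $F_{\max}=H_{\max}$.

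For part (1), let $\lset{I}\in H$. Since $H|_{V(\lset{I})}$ contains the single hyperedge $\lset{I}$ covering all of $V(\lset{I})$, the set $V(\lset{I})$ is connected by $H$. Hence it lies inside a unique block of $H_{\max}$. By \ref{HF3} these blocks are exactly the sets $F_{\leq r_j}$. So if $V(\lset{I})$ meets $F_{\leq r}$, it must be contained in $F_{\leq r}$, because the blocks of $H_{\max}$ are pairwise disjoint. Applying this to each hyperedge gives:
\begin{itemize}
\item every hyperedge lies in exactly one $H|_{F_{\leq r_j}}$, which is the stated disjoint decomposition of $H$;
\item every hyperedge containing $r$ meets $F_{\leq r}$ and so lies in $H|_{F_{\leq r}}$, which gives $H_r\subseteq H|_{F_{\leq r}}$.
\end{itemize}

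For part (2), assume $H$ is connected, so $F$ is a single tree with root $r$ and $F_{\leq r}=V$. Split $H$ into the hyperedges containing $r$, namely $H_r$, and the rest, $H|_{V\setminus r}$; this split is disjoint by definition.
\begin{itemize}
\item By Lemma \ref{lemma:H_Forest_removing_root}, $F|_{V\setminus r}$ is an $H|_{V\setminus r}$-forest.
\item Its roots are the children $u_1,\dots,u_l$ of $r$, and its components have vertex sets $F_{\leq u_i}$.
\item Apply part (1) to the hypergraph $H|_{V\setminus r}$ and the forest $F|_{V\setminus r}$. This gives $H|_{V\setminus r}=\bigsqcup_i H|_{V\setminus r}|_{F_{\leq u_i}}$.
\end{itemize}
Finally, $H|_{V\setminus r}|_{F_{\leq u_i}}=H|_{F_{\leq u_i}}$, since $F_{\leq u_i}\subseteq V\setminus r$. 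Combining these yields $H=H_r\sqcup\bigsqcup_i H|_{F_{\leq u_i}}$.

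The only point needing care is that part (1) applies to the restricted forest. This requires its components to be exactly the $F_{\leq u_i}$, which is the \ref{HF3} condition for $F|_{V\setminus r}$ guaranteed by Lemma \ref{lemma:H_Forest_removing_root}. Beyond that the argument is bookkeeping, and I expect no real obstacle.
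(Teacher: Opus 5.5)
Your proof is correct and follows the paper's argument. For part (1), you use that $V(\lset{I})$ is connected and so, by \ref{HF3}, cannot meet two distinct blocks $F_{\leq r_j}$; the paper phrases the same point as a contradiction with the maximality of $F_{\leq r}$. For part (2), the paper says only that it follows from this property, and your reduction to part (1) for $F|_{V\setminus r}$, via Lemma \ref{lemma:H_Forest_removing_root}, correctly supplies that omitted detail.
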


\begin{proof} If $V(\lset{I})\not\subseteq F_{\leq r}$, $V(\lset{I})\cup F_{\leq r}$ is a larger set connected by $H$ contradicting \ref{HF3}, so $V(\lset{I})\subseteq F_{\leq r}$. The other statements follow from this property.
\end{proof}

 For the following result we consider that $V$ is provided with the structure of a total order $V$. For instance, if $V=[n]$ we can use the usual order of the integers. Assuming this order, we can say that an edge $(u,v)\in E(F)$, in an $H$-forest $F$, is a \textit{descent} if $u>v$. The total number of descents in $F$ is  denoted $\des(F)$.

\begin{proposition}[c.f. Corollary 8.4, \cite{PostnikovReinerWilliams2007}]\label{proposition:h_vector_H_forests}
	If $H$ is a hypergraph on an ordered set $V$, then $$h_{H}(t)=\sum_{F\text{ an }H\text{-forest }  } t^{\des(F)}.$$
\end{proposition}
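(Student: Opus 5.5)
The proof reduces to the corresponding statement for building sets, Corollary 8.4 of \cite{PostnikovReinerWilliams2007}, which gives $h_{\B}(t)=\sum_{F} t^{\des(F)}$ over all $\B$-forests $F$. The reduction has two steps.

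First, by definition $h_H(t)=h_{P_H}(t)=h_{P_{\B(H)}}(t)=h_{\B(H)}(t)$. So the left-hand side depends on $H$ only through the building set $\B(H)$.

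Second, by the Remark following Definition \ref{definition:B_forest}, conditions \ref{HF1}--\ref{HF3} can be rewritten purely in terms of $\B(H)$:
\begin{itemize}
\item \ref{HF1} says $F_{\le v}\in\B(H)$ for every $v$;
\item \ref{HF2} says that unions of subtrees rooted at incomparable nodes are not in $\B(H)$;
\item \ref{HF3} says $F_{\max}=\B(H)_{\max}$.
\end{itemize}
These are exactly the axioms of Postnikov's $\B(H)$-forests (Definition 7.7 of \cite{Postnikov2009}). Hence $H$-forests and $\B(H)$-forests coincide as sets of rooted forests on $V$. The descent statistic $\des(F)$ depends only on the rooted forest and the total order on $V$, so the right-hand sides agree as well.

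It remains to confirm that the cited result is stated in the same generality, namely for arbitrary, possibly disconnected building sets. If \cite{PostnikovReinerWilliams2007} states it only for connected building sets, I would handle the general case as follows.
\begin{enumerate}
\item Since $F_{\max}=H_{\max}$, an $H$-forest is exactly a choice of one $H|_I$-tree for each $I\in H_{\max}$.
\item The descents of $F$ are the disjoint union of the descents of these trees, so the generating function $\sum_F t^{\des(F)}$ factors as a product over the components $I\in H_{\max}$.
\item By Proposition \ref{proposition:recurrence_postnikov}(2), $h_{\B}$ also factors as the product of the $h$-polynomials of its components.
\item Applying the connected case to each component gives the result.
\end{enumerate}

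The main obstacle is this bookkeeping of conventions rather than any new argument. Two points need checking: that Postnikov's condition (2) matches \ref{HF2}, including the requirement $k\ge 2$; and that "descent" is oriented the same way (child larger than parent) as in \cite{PostnikovReinerWilliams2007}. If the orientation were reversed, I would note that the $h$-polynomial is palindromic (Dehn--Sommerville for simple polytopes), so counting ascents instead of descents yields the same polynomial.
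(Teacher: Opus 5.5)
Your proposal is correct and follows the paper's route. The paper gives no separate proof: it uses $h_H(t)=h_{\B(H)}(t)$, which holds by definition, and the Remark after Definition~\ref{definition:B_forest}, which identifies $H$-forests with $\B(H)$-forests, and then cites Corollary 8.4 of \cite{PostnikovReinerWilliams2007}. Your extra checks for disconnected $H$ and for the orientation of descents are sound, though the paper simply takes the cited result at face value; the orientation issue is also settled by reversing the total order on $V$, which swaps ascents and descents.
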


Naturally, if $\B=\B(H)$, for some $H$, this sum over the $H$-forests is the same as the sum over the $\B$-forests, which is consistent with our definition $h_H(t)=h_{\B(H)}(t)$.

\section{A sign reversing involution}\label{section:sign_reversing_involution}

The goal of this section is to establish an analogous equation to \eqref{equation:M_lemma_1} for $h$-polynomials of hypergraphs. More precisely, we prove the following.

\begin{proposition}\label{proposition:main_argument}
Let $H$ be a connected hypergraph such that $|V(H)|>1$. Then \begin{equation}\label{equation:subgraphs}
    \sum_{H' \leq H} [\kappa(H')]_t (-1)^{|H'|}h_{H'}(t)=0.
    \end{equation} 
\end{proposition}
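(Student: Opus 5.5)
We plan to prove \eqref{equation:subgraphs} by expanding both $[\kappa(H')]_t$ and $h_{H'}(t)$ combinatorially and cancelling terms with a sign-reversing involution. By Proposition \ref{proposition:h_vector_H_forests}, $h_{H'}(t)=\sum_F t^{\des(F)}$ over $H'$-forests $F$. Since $F_{\max}=H'_{\max}$ by \ref{HF3}, $F$ has exactly $\kappa(H')$ roots, so $[\kappa(H')]_t=\sum_{j=0}^{\kappa(H')-1}t^j$. Fix the total order on $V$. We encode the factor $t^j$ by adding to $F$ a chain of $j$ extra descent edges linking the roots. Concretely, order the roots $r_1<\dots<r_m$ of $F$ (or order the trees by some canonical rule). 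The term $t^j$ then corresponds to choosing how to glue the trees into a single rooted tree by attaching roots in a path, so that exactly $j$ of the new gluing edges are descents. The left side of \eqref{equation:subgraphs} becomes a signed sum over triples $(H',F,\sigma)$ with weight $(-1)^{|H'|}t^{\des(F)+\des(\sigma)}$. Here each triple can be read as a single rooted tree $T$ on $V$, together with a marking of which tree edges are ``$H'$-edges'' and which are ``gluing edges''.

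The involution should toggle one hyperedge in or out of $H'$ while keeping the statistic $t^{\cdot}$ fixed. We use the root structure. Let $r$ be the overall root of $T$, i.e.\ the first root in the gluing order. By Lemma \ref{lemma:Btrees_struts}, $H'$ splits as $H'_r$ together with the restrictions to the subtrees below $r$. Remark \ref{remark:Hv_non_empty} guarantees $H_r\neq\emptyset$ because $H$ is connected and $|V|>1$. Fix a total order on the hyperedges and let $\lset{I}$ be the least hyperedge of $H_r$ that is ``compatible'' with the current structure. Define the involution as $H'\mapsto H'\triangle\{\lset{I}\}$, and adjust $F$ and the gluing accordingly. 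Adding $\lset{I}$ merges the tree rooted at $r$ with the trees whose vertex sets meet $V(\lset{I})$; this converts gluing edges into genuine $H'$-tree edges. Removing $\lset{I}$ does the reverse, splitting the tree at $r$ into components via Lemma \ref{lemma:H_Forest_removing_root} and Proposition \ref{proposition:B_forests}. The sign flips because $|H'|$ changes by one. The number of descents is preserved if we require the converted edges to keep their orientation, which is exactly why the gluing in the first step should attach each merged root directly as a child of $r$ (or of the appropriate node), rather than in an arbitrary path.

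Finally we check that the map is a well-defined involution without fixed points. Two things must hold. First, after toggling, the resulting forest satisfies \ref{HF1}--\ref{HF3} for the new hypergraph. Second, the choice of $\lset{I}$ is the same before and after the toggle. For the first point we use the recursive description of Proposition \ref{proposition:B_forests} at the root $r$: the children subtrees of $r$ must be trees for the components of $H'|_{V\setminus r}$, and adding or removing a hyperedge containing $r$ does not change $H'|_{V\setminus r}$. No fixed points exist because $H_r\neq\emptyset$. Hence all terms cancel and the sum vanishes.

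The main obstacle is the design of the gluing encoding of $[\kappa(H')]_t$. It must interact correctly with toggling a hyperedge through $r$. Toggling a hyperedge of $H_r$ leaves $H'|_{V\setminus r}$ unchanged, but it changes which subtrees below $r$ are separate components of $H'$ and which are children of $r$ within one $H'$-tree. So the encoding should represent a component not containing $r$ as a subtree hung from $r$ with a gluing edge. It must then count descents so that the weights sum exactly to $[\kappa]_t$. We would first try the encoding in which every non-$r$ component root is attached to $r$ and counted via $r$-relative descents. If that fails to give $[\kappa]_t$ exactly, we would fall back to the canonical path ordering by minimal vertex and choose $\lset{I}$ to involve the root adjacent to $r$ in the path.
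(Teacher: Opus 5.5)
\textbf{Gap: the encoding of $[\kappa(H')]_t$.} Your overall strategy is the paper's: expand $h_{H'}(t)$ over $H'$-forests, absorb $[\kappa(H')]_t$ into a choice made on the roots, and cancel terms by toggling a hyperedge through a distinguished root $r$, using that $H'|_{V\setminus r}$ is unchanged. But the proposal has a gap exactly where you flag it: the encoding you describe in the main text does not produce $[\kappa(H')]_t$.

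If the trees are glued along a path over all orderings of the roots, the gluing descents follow the Eulerian numbers. For $\kappa=3$ you get $1+4t+t^2$, not $1+t+t^2$. If the path is canonical, as in your fallback by minimal vertex, each forest contributes a single monomial. In a path, the trees met by $\lset{I}$ also do not hang directly from $r$. So toggling $\lset{I}$ would change the glued tree, and your descent-preservation argument would not apply.

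The encoding that works is the one you list as a ``first try'', provided $r$ ranges over all $\kappa(H')$ roots of $F$. Attach every other root directly to $r$. The gluing descents are then $\sigma_F(r)=\#\{\text{roots of }F\text{ smaller than }r\}$, which takes each value $0,\dots,\kappa(H')-1$ exactly once. This is precisely the paper's set $\Omega_H$ of triples $(H',F,r)$ with weight $(-1)^{|H'|}t^{\des(F)+\sigma_F(r)}$.

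\textbf{Choice of hyperedge.} Drop the undefined ``compatibility'' condition. Fix once and for all some $\lset{I}_r\in H_r$ for each vertex $r$ (for instance the least one); it exists by Remark \ref{remark:Hv_non_empty}. Adding and removing are always defined, and the involution keeps $r$, so the same hyperedge is chosen before and after the toggle automatically.

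With the star gluing, the glued tree is literally unchanged by the toggle. Only the labels of some edges at $r$ switch between ``gluing'' and ``$H'$-edge''. So the exponent is preserved trivially, which is a cleaner view than the paper's check that $\des$ shifts by $\pm N$ while $\sigma_F(r)$ shifts by $\mp N$.

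\textbf{What remains to prove.} Your sketch still leaves unproved the content of Lemma \ref{lemma: F_pm_I_well_defined}:
\begin{enumerate}
\item The removal must detach exactly those children $\ell$ of $r$ whose subtree $F_{\le \ell}$ meets no hyperedge of $H'_r$ other than $\lset{I}$, i.e.\ condition $(C_{H',\lset{I},r})$ of Definition \ref{definition:FpmI_forests}.
\item The remaining block, $\{r\}$ together with the other child subtrees, must still be connected by $H'-\lset{I}$; this uses the other hyperedges through $r$.
\item The resulting blocks must be exactly $(H'\pm\lset{I})_{\max}$.
\item The add and remove operations must be mutually inverse.
\end{enumerate}
Lemma \ref{lemma:H_Forest_removing_root}, together with your observation that $H'|_{V\setminus r}$ is unchanged, then completes the argument.
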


The idea is to rewrite  \eqref{equation:subgraphs}  as a sum over a suitable set of combinatorial objects $\Omega_H$ together with a weight function  $\omega:\Omega_H\rightarrow \mathbb{Z}[t]$. We want $\omega(x)$ to be the contribution of $x\in \Omega_H$ to the left-hand side of \eqref{equation:subgraphs}. Then, we find a sign-reversing involution $\psi:\Omega_H\to \Omega_H,$ i.e., $\psi\circ\psi=\text{id}_{\Omega_H}$ and  $\omega(\psi(x))=-\omega(x)$, and conclude that the left-hand side of \eqref{equation:subgraphs} vanishes.

Based on Proposition \ref{proposition:h_vector_H_forests}, we can replace the polynomial $h_{H'}(t)$, by the sum of $t^{\des(F)}$ over all $H'$-forest $F$. Moreover, if $F$ is an $H'$-forest, we can sort its roots $$r_0 < r_1 < \cdots <r_{\kappa(H')-1}$$ in ascending order, according to the total order of $V$ and define $\sigma_F(r_i):=i$.

Taking these observations into account, we define \begin{align*}
	\Omega_H&:= \left\{(H',F,r)\mid H' \leq H,\, F \text{ an } H' \text{-forest, and }  r \text{ a root of } F \right\},\\
 \omega&:\Omega_H\rightarrow \mathbb{Z}[t],\qquad \omega (H',F,r) =  (-1)^{|H'|}t^{\des(F) + \sigma_F(r)}.
\end{align*} 
Since $[\kappa(H')]_t=1 + t + \cdots + t^{\kappa(H')-1}$, (\ref{equation:subgraphs}) can be rewritten as 
\begin{equation}\label{equation:sum_omega}
\sum_{(H',F,r) \in \Omega_{H}} \omega(H',F,r)=0.
\end{equation} 

In order to establish this equation we need an auxiliary construction. The sign reversing involution we present is based on adding or removing a hyperedge from $H'$ according to a triple $(H',F,r)$, depending on the value of $r$.

\begin{definition}[Adding/removing hyperedges]\label{definition:adding_removing_hyperedges}
If $H' \leq H$ and $\lset{I} \in H$, we denote by $H'{\pm \lset{I}} \subseteq H$ the spanning subhypergraphs \begin{align*}
H'{+\lset{I}} & := H'\cup\{\lset{I}\}, \text{ if } \lset{I}\not\in H',\\ H'{-\lset{I}} &:= H'\setminus\{\lset{I}\}, \text{ if } \lset{I}\in H'.
\end{align*}
\end{definition}
 With a bit more work in Definition \ref{definition:FpmI_forests} below, starting from an $H'$-forest $F$ and a root $r$ of $F$, we will define  $(H'{\pm \lset{I}})$-forests $F^{\pm_r \lset{I}}$. Then, fixing suitable hyperedges $\lset{I}_v\in H_v$, for every $v\in V$---which can be chosen by Remark \ref{remark:Hv_non_empty} and the connectedness of $H$, allow us to define the map 
\begin{equation}\label{equation:sign_reversing_involution_definition}
    \psi: \Omega_{H} \longrightarrow \Omega_{H},\quad \psi(H',F,r)=\begin{cases}
(H'{-\lset{I}_{r}},F^{-_r\lset{I}_r},r),&\text{ if } \lset{I}_{r} \in H',\\
(H'{+\lset{I}_{r}},F^{+_r\lset{I}_r},r),&\text{ if } \lset{I}_r\not\in H'.\end{cases}
\end{equation} 

The remainder of this section will be devoted to proving that $\psi$ is a well-defined sign-reversing involution.

Let $F$ be an $H'$-forest. If $u\in V$, let 
\begin{equation*}
\rho_F(u):=r \textup{  be the root of } F \textup{ such that } u\in F_{\leq r}.
\end{equation*} 

Moreover, if $u\in V$ and $\rho_F(u)=r$, but $u\neq r$, let \begin{equation*}
 \ell_F(u):=v  \textup{ be the child of  } r \textup{ such that } u\in F_{\leq v}.
\end{equation*}

If the context is clear, we simply write $\rho(u)=\rho_F(u)$ and $\ell(u)=\ell_F(u)$. Figure \ref{figure:tree_example} illustrates the use of this notation.

 \begin{figure}[ht]
    \centering
    \captionsetup{justification=centering}
    \resizebox{0.5\linewidth}{!}{\begin{tikzpicture}[main_node_t/.style={circle,fill=black,minimum size=0.5em,inner sep=1pt]}, main_node_g/.style={circle,inner sep=2pt,draw = black, line width=0.1mm}]

\node at (2.5,0.6) {$F$};

\node[main_node_t] (0) at (0,0) {};
\node[main_node_t] (1) at (0,-1) {};
\node[main_node_t] (2) at (-1,-2) {};
\node[main_node_t] (3) at (0,-2) {};
\node[main_node_t] (4) at (1,-2) {};
\node[main_node_t] (5) at (-1,-3) {};
\node[main_node_t] (6) at (1,-3) {};
\node[main_node_t] (8) at (0,-4) {};
\node[main_node_t] (9) at (2,-4) {};

\draw [->] (0) -- (1);
\draw [->] (1) -- (2);
\draw [->] (1) -- (3);
\draw [->] (1) -- (4);
\draw [->] (2) -- (5);
\draw [->] (4) -- (6);
\draw [->] (6) -- (8);
\draw [->] (6) -- (9);

\draw [line width=1pt,dash pattern=on 1pt off 1pt] (0,0) circle(0.3cm);
\draw [line width=1pt,dash pattern=on 1pt off 1pt] (0,-1) circle(0.3cm);
\draw [line width=1pt,dash pattern=on 1pt off 1pt] (1,-3) circle(0.3cm);

\node at (1,0) {$\rho_F(u_1)$};
\node at (1,-1) {$\ell_F(u_1)$};
\node at (1.6,-3) {$u_1$};

\node[main_node_t] (0) at (5,0) {};
\node[main_node_t] (1) at (4,-1) {};
\node[main_node_t] (2) at (5,-1) {};
\node[main_node_t] (3) at (6,-1) {};

\node[main_node_t] (4) at (4,-2) {};
\node[main_node_t] (5) at (5.5,-2) {};
\node[main_node_t] (6) at (6.5 ,-2) {};
\node[main_node_t] (7) at (5.5,-3) {};

\draw [line width=1pt,dash pattern=on 1pt off 1pt] (5,0) circle(0.3cm);
\draw [line width=1pt,dash pattern=on 1pt off 1pt] (6,-1) circle(0.3cm);
\draw [line width=1pt,dash pattern=on 1pt off 1pt] (5.5,-3) circle(0.3cm);

\node at (6,0) {$\rho_F(u_2)$};
\node at (7,-1) {$\ell_F(u_2)$};
\node at (6.1,-3) {$u_2$};

\draw [->] (0) -- (1);
\draw [->] (0) -- (2);
\draw [->] (0) -- (3);
\draw [->] (1) -- (4);
\draw [->] (3) -- (5);
\draw [->] (5) -- (7);
\draw [->] (3) -- (6);
\end{tikzpicture}}
	\caption{Nodes $\rho_F(u_i)$ and $\ell_F(u_i)$ for $u_1,u_2\in V$.}
    \label{figure:tree_example}
\end{figure}

\begin{definition}\label{definition:FpmI_forests} Fix $H' \leq H$, $F$ an $H'$-forest, $r$ a root of $F$, and $\lset{I} \in H_r$. We define forests $F^{\pm_r \lset{I}}$ according to the following two cases:
\begin{enumerate}[]

\item If $\lset{I} \in H'$, let $F^{-_r\lset{I}}$ be the forest obtained by removing from $F$ the edges $\{r,\ell(u)\}$, for every $u\in V(\lset{I})\setminus r$ satisfying the following condition:\begin{equation}\label{equation:C_condition}\tag{$C_{H',\lset{I},r}$}
\text{Whenever $\lset{J}\in H'_r$ satisfies that $V(\lset{J})\cap F_{\leq \ell_F(u)}\neq \emptyset$ then $\lset{J}=\lset{I}$.}
\end{equation} In simple terms, we are removing from $F$ the edges $\{r,\ell(u)\}$ such that $H'|_{F_{\leq \ell(u)}}$ becomes a maximal connected component in $H'$ when removing $\lset{I}$ from $H'$. 

\item If $\lset{I} \not\in H'$, let 
 $F^{+_r\lset{I}}$ be the forest obtained from $F$ by attaching the root  $\rho_F(u)$ as a child of $r$ by a new edge, for each $u\in V(\lset{I})\setminus r$ such that $\rho_F(u)\neq r$.

\end{enumerate}

Lemma \ref{lemma: F_pm_I_well_defined} below shows that  $F^{\pm_r \lset{I}}$  are $(H'\pm I)$-forests.   
\end{definition}

\begin{example}\label{example:pm_H_forests} Consider the hypergraphs $H'$ and $H''$ depicted in Figure \ref{figure:F_pm_I}, which are both subhypergraphs of the hypergraph $H$ on $[8]$ given in Figure \ref{figure:B_tree_recursively}. 
We have that $H''=H'{+\lset{I}}$ and $H'=H''{-\lset{I}}$ where $\lset{I}\in H''$ is the unique hyperedge with $V(\lset{I}) = \{1,2,4,5\}$. 
The forest $F'$ and the tree $F''$ verify the conditions in Definition \ref{definition:B_forest}  to be an $H'$-forest and an $H''$-tree, respectively. With respect to  $r=1$, observe first that the children of $1$ in $F''$ are 
$2=\ell_{F''}(2)$, $6=\ell_{F''}(4)$, and $7=\ell_{F''}(5)$, and $(H''-\lset{I})_1=H'_1=\{\lset{J}\}$ where $\lset{J}$ is the unique hyperedge with $V(\lset{J})=\{1,2,3\}$. Since $V(\lset{J})\cap F'_{\le 2}\neq \emptyset$ while $V(\lset{J})\cap F'_{\le 6}= V(\lset{J})\cap F'_{\le 7}= \emptyset$, applying part (1) of Definition \ref{definition:FpmI_forests} we obtain that $F'=F''^{-_1\lset{I}}$. On the other hand, $\rho_{F'}(1)=\rho_{F'}(2)=1$, $\rho_{F'}(4)=6$ and $\rho_{F'}(5)=7$ and so $F''=F'^{+_1\lset{I}}$. This also illustrates that $(F'^{+_1\lset{I}})^{-_1\lset{I}}=F'$ and $(F''^{-_1\lset{I}})^{+_1\lset{I}}=F''$.

\begin{figure}[ht]
    \centering
    \captionsetup{justification=centering}
    \resizebox{1\linewidth}{!}{\input{Figures/remove_forest.tex}}
	\caption{Example of operations in Definitions \ref{definition:adding_removing_hyperedges} and \ref{definition:FpmI_forests}.}
    \label{figure:F_pm_I}
\end{figure} 
\end{example}

The following lemma contains key properties of the previous constructions.

\begin{lemma}\label{lemma: F_pm_I_well_defined} Fix $(H',F,r)\in\Omega_H$ and $\lset{I}\in H_r$. The following assertions hold:\begin{enumerate}
\item If $\lset{I} \in H'$, then $F^{-_r\lset{I}}$ is an $(H'{-\lset{I}})$-forest.

\item If $\lset{I} \not\in  H'$,  then $F^{+_r\lset{I}}$ is an $(H'{+\lset{I}})$-forest.

\item $(F^{-_r\lset{I}})^{+_r\lset{I}} = (F^{+_r\lset{I}})^{-_r\lset{I}}$=F.
\end{enumerate}	
\end{lemma}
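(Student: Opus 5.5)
The plan is to verify parts (1) and (2) with the recursive characterization of Lemma \ref{lemma:H_Forest_removing_root}, and part (3) directly from the definitions. We use Lemma \ref{lemma:Btrees_struts} throughout. Since $r$ is a root of $F$ and $\lset{I}\in H_r$, part \ref{part_1_lemma_Btrees_struts} of that lemma shows that, whenever $\lset{I}\in H'$, we have $V(\lset{I})\subseteq F_{\leq r}$. Both operations only touch the tree of $F$ rooted at $r$ and the roots of trees meeting $V(\lset{I})$; all other $H'$-trees are left unchanged, and they are still trees of the correct component of $H'\pm\lset{I}$. The work is therefore local to the component $C$ of $H'\pm\lset{I}$ containing $r$.

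For (2), take $\lset{I}\notin H'$. The component of $H'+\lset{I}$ containing $r$ is $C=\bigcup_{u\in V(\lset{I})}F_{\leq\rho(u)}$, with the union taken without repetitions. The tree $F^{+_r\lset{I}}$ has $r$ as root, and its vertex set is exactly $C$, so \ref{HF3} holds. By Lemma \ref{lemma:H_Forest_removing_root}, it suffices to show that deleting $r$ leaves an $(H'+\lset{I})|_{C\setminus r}$-forest. Since $r\in V(\lset{I})$, we have $(H'+\lset{I})|_{C\setminus r}=H'|_{C\setminus r}$. This is the disjoint union of $H'|_{F_{\leq r}\setminus r}$ and the $H'|_{F_{\leq \rho(u)}}$. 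The forest obtained by deleting $r$ consists of $F|_{F_{\leq r}\setminus r}$, which is an $H'|_{F_{\leq r}\setminus r}$-forest by the same lemma applied to $F$, together with the trees $F_{\leq\rho(u)}$, which are $H'$-trees. So the claim follows.

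For (1), take $\lset{I}\in H'$. In $H'-\lset{I}$, the connected component of $r$ must be identified with $F_{\leq r}$ minus the subtrees $F_{\leq\ell(u)}$ cut off by condition \eqref{equation:C_condition}. Each cut subtree $F_{\leq \ell(u)}$ should become its own component, and it is already an $H'$-tree supported on hyperedges not involving $\lset{I}$. The key claim is this. Write $u_1,\dots,u_l$ for the children of $r$, and use the decomposition $H'=H'_r\sqcup\bigsqcup_i H'|_{F_{\leq u_i}}$ from Lemma \ref{lemma:Btrees_struts}(2). A subtree $F_{\leq u_i}$ is disconnected from $r$ in $H'-\lset{I}$ exactly when every hyperedge of $H'_r$ meeting it equals $\lset{I}$. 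This holds because only hyperedges in $H'_r$ cross between different children's subtrees or reach $r$, and every hyperedge of $H'_r$ other than $\lset{I}$ contains $r$. Hence the remaining children, together with $r$, stay connected through $H'_r\setminus\{\lset{I}\}$. Moreover, any subtree meeting no hyperedge of $H'_r$ would make $H'$ disconnected, and so contradict \ref{HF3}; therefore the subtrees satisfying the condition are exactly those $F_{\leq\ell(u)}$ with $u\in V(\lset{I})$. Then \ref{HF3} holds, and applying Lemma \ref{lemma:H_Forest_removing_root} at $r$ finishes (1), since removing $r$ leaves the same forest on the remaining vertices.

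For (3), first compute $(F^{+_r\lset{I}})^{-_r\lset{I}}$. The new children of $r$ are the roots $\rho(u)$. Their subtrees meet only $\lset{I}$ among the hyperedges of $(H'+\lset{I})_r$, so condition \eqref{equation:C_condition} holds for them. The original children fail the condition: each of them meets some hyperedge of $H'_r$, which contains $r$ and so cannot be $\lset{I}$. So exactly the added edges are removed. In the other direction, after applying $-_r\lset{I}$, the roots $\rho(u)$ for $u\in V(\lset{I})\setminus r$ with $\rho(u)\neq r$ are precisely the cut children $\ell(u)$, and reattaching them restores $F$. The main obstacle is the characterization in (1), namely that connectivity through $r$ in $H'-\lset{I}$ is governed exactly by \eqref{equation:C_condition}; I would handle it carefully using Lemma \ref{lemma:Btrees_struts}(2).
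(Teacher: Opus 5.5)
Your proof is correct and follows the paper's route. Both arguments apply Lemma \ref{lemma:H_Forest_removing_root} at $r$, using $(H'\pm\lset{I})|_{V\setminus r}=H'|_{V\setminus r}$ and $(F^{\pm_r\lset{I}})|_{V\setminus r}=F|_{V\setminus r}$, then match $(H'\pm\lset{I})_{\max}$ with $(F^{\pm_r\lset{I}})_{\max}$, and prove (3) by tracking which edges at $r$ are added and removed. Your use of Lemma \ref{lemma:Btrees_struts}(2) in (1) is actually more careful than the paper's formula $(F^{-_r\lset{I}})_{\leq r}=\{r\}\cup\bigsqcup_j F_{\leq \ell(v_j)}$, because it also handles children of $r$ whose subtrees miss $V(\lset{I})$.

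One justification in (3) needs fixing. An original child's subtree fails \eqref{equation:C_condition} because the hyperedge of $H'_r$ meeting it lies in $H'$ while $\lset{I}\notin H'$. It is not because that hyperedge contains $r$, since $\lset{I}$ contains $r$ too.
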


\begin{proof} Let $r=r_0,\dots, r_M$ be a list of the roots of $F$, so $H'_{\max} = \{ F_{\leq r_0}, \dots , F_{\leq r_M}\}$.

In order to prove (1) and (2), we will apply Lemma \ref{lemma:H_Forest_removing_root} by showing first that $(F^{\pm_r \lset{I}})|_{V\setminus r}$ is an $(H'{\pm \lset{I}})|_{V\setminus r}$-forest, and second that $(H'{\pm \lset{I}})_{\max}=(F^{\pm_r \lset{I}})_{\max}$.

For the first condition note that 
\begin{align}
  \label{equation:restriction_part1}(H'{\pm \lset{I}})|_{V\setminus r} &= H'|_{V\setminus r},\\ \label{equation:restriction_part2}(F^{\pm_r \lset{I}})|_{V\setminus r}  &= F|_{V\setminus r}.  
\end{align}
Indeed, \eqref{equation:restriction_part1} holds since $r\in V(\lset{I})$ and the hyperedges of such restrictions are precisely those not containing $r$. To establish  \eqref{equation:restriction_part2}, note that  $F|_{V\setminus r}$ is obtained from $F$ by removing all edges containing $r$ and then Proposition  \ref{proposition:B_forests}. implies that $F|_{V\setminus r}$ is an $H'|_{V\setminus r}$-forest.   We conclude using \eqref{equation:restriction_part1} then that $(F^{\pm_r \lset{I}})|_{V\setminus r}$ is an $(H'{\pm \lset{I}})|_{V\setminus r}$-forest. 

For the second condition, let us write  \begin{equation}\label{equation:different_vertices_of_I}
V(\lset{I})=\{r,u_1,\dots,u_m,v_1,\dots,v_p\},
\end{equation} $m, p\geq 0$, where the $u_i$ satisfy (\ref{equation:C_condition}) and the $v_j$ do not. We analyze the two cases:

\begin{enumerate}
    \item \textbf{If $\lset{I}\in H'$}:  The roots of $F^{-_r\lset{I}}$ are $r, r_1,\dots,r_M,\ell(u_1),\dots,\ell(u_m)$ and \begin{equation}\label{equation:Fmax_minus_I}
  (F^{-_r\lset{I}})_{\max} = \{F_{\leq r_1}, \dots , F_{\leq r_M}\} \cup \{F_{\leq \ell(u_i)}\}_{i=1}^m \cup \{(F^{-_r\lset{I}})_{\leq r }\}.    
  \end{equation}

In order to prove that $(F^{-_r\lset{I}})_{\max}=(H'-\lset{I})_{\max}$ it is sufficient to show that  each $F_{\leq \ell(u_i)}$ and $(F^{-_r\lset{I}})_{\leq r}$ are in $(H'{-\lset{I}})_{\max}$.

For the former, assume by contradiction that there is $\lset{J}\in H'{-\lset{I}}$ with $V(\lset{J})\not\subseteq F_{\leq \ell(u_i)}$ and $V(\lset{J})\cap F_{\leq \ell(u_i)}\neq \emptyset$. If $r\not\in V(\lset{J})$, $\lset{J}$ would be connecting $F_{\leq \ell(u_i)}$ with other $F_{\leq u}$ in $H'$, for some $u\neq r$, $u\in V(\lset{J})\setminus F_{\leq \ell(u_i)}$. But then $F$ does not satisfy the definition of an $H'$-forest, which gives a contradiction. Therefore, $r\in V(\lset{J})$. In turn, this contradicts that $u_i$ satisfies (\ref{equation:C_condition}). We conclude that $F_{\leq \ell(u_i)}$ is maximally connected by $H'-\lset{I}$.

For the latter, note that $$(F^{-_r\lset{I}})_{\leq r} = \{r\}\cup \bigsqcup_{j=1}^p F_{\leq \ell(v_j)}.$$ 

Observe that all $F_{\leq \ell(u_i)}, F_{\leq \ell(v_j)}$ are connected by $H'|_{V \setminus r}$, since they are by $H'$ and none of them contains $r$. Moreover, since each $v_j$ fails to satisfy (\ref{equation:C_condition}), we can find $\lset{J}_j \in H'_r$, $\lset{J}_j\neq \lset{I}$, with $V(\lset{J}_j)\cap F_{\leq \ell(v_j)}\neq \emptyset$. We also have that  $C=\bigcup_{j=1}^p V(\lset{J}_j)$ is connected by  $H'-I$ since all these hyperedges contain $r$. By Lemma \ref{lemma:Btrees_struts} (\ref{part_1_lemma_Btrees_struts}), $V(\lset{J}_j)\subseteq F_{\leq r}$ for all $j$, and then $C\subseteq F_{\leq r}$.  Furthermore, $C\subseteq (F^{-_r\lset{I}})_{\leq r}$ since $V(\lset{J}_j)\cap F_{\leq \ell(u_i)}=\emptyset$ for every $i$---otherwise $u_i$ would not satisfy (\ref{equation:C_condition}). Thus $C$ connects $(F^{-_r\lset{I}})_{\leq r}$ in $H'{-\lset{I}}$, and $(F^{-_r\lset{I}})_{\leq r}$ is connected by $H'{-\lset{I}}$ as needed. Finally, that $(F^{-_r\lset{I}})_{\leq r}$ is maximally connected by $H'{-\lset{I}}$ follows since $(F^{-_r\lset{I}})_{\max}$ in (\ref{equation:Fmax_minus_I}) is a partition of $V$ and all $\{F_{\leq r_k}\}_{k = 1}^{M}$ and $\{F_{\leq \ell(u_i)}\}_{i=1}^m$ are maximal in $H'{-\lset{I}}$, except perhaps $(F^{-_r\lset{I}})_{\leq r}$, then $(F_{\max})^{-_r\lset{I}} = (H'{-\lset{I}})_{\max}$.

Since the conditions of Lemma \ref{lemma:H_Forest_removing_root} hold in this case, we find that $F^{-_r\lset{I}}$ is an $(H{-\lset{I}})$-forest.

    \item \textbf{If $\lset{I}\not\in H'$}:
We have that \begin{align}
\nonumber (F^{+_r\lset{I}})_{\max} =& (\{F_{\leq r_1}, \dots , F_{\leq r_M}\}\setminus\{F_{\leq \rho(u)} \mid u \in V(\lset{I})\})\,\cup  \{(F^{+_r\lset{I}})_{\leq r}\}\\
\label{equation:F_mas_plus_I}=& \{F_{\leq r_{t_1}}, \dots , F_{\leq r_{t_k}}\}\,\cup  \{(F^{+_r\lset{I}})_{\leq r}\},
\end{align} where $$(F^{+_r\lset{I}})_{\leq r} = \bigcup_{u \in I} F_{\leq \rho(u)}\in H'{+\lset{I}},$$ 
and $r_{t_1},\dots, r_{t_k}$ are the roots of trees in $F$ that do not intersect $V(\lset{I})$.

The construction of $F^{+_r\lset{I}}$ does not affect any of the $F_{\leq r_{t_j}}$, and they remain maximal in $H'{+\lset{I}}$.
We also have that, for all $u\in V(\lset{I})$, $F_{\leq \rho(u)}$ is connected by $H' \subset H'{+\lset{I}}$ and $ V(\lset{I})\cap F_{\leq \rho(u)}\neq \emptyset$. Thus $V(\lset{I})$ connects $\{F_{\leq \rho(u)}\}_{u\in V(\lset{I})}$ in $H'{+\lset{I}}$ and $(F^{+_r\lset{I}})_{\leq r}$ is connected by $H'+\lset{I}$.  Moreover, $(F^{+_r\lset{I}})_{\leq r}$ has to be also maximal in $H'{+\lset{I}}$ since the blocks in (\ref{equation:F_mas_plus_I}) form a partition of $V$. 

We conclude that $(H'{+\lset{I}})_{\max}=(F^{+_r\lset{I}})_{\max}$ and, by Lemma \ref{lemma:H_Forest_removing_root}, $F^{+_r\lset{I}}$ is an $(H'{+\lset{I}})$-forest, as required. 
\end{enumerate}

This concludes the proof of (1) and (2). To prove (3), note  that (1) and (2) already imply that $(F^{-_r\lset{I}})^{+_r\lset{I}}$ and $(F^{+I})^{-_r\lset{I}}$ are $H'$-forests and so
\begin{equation}\label{equation:pm_forests_max}
    ((F^{-_r\lset{I}})^{+_r \lset{I}})_{\max}=((F^{+_r\lset{I}})^{-_r\lset{I}})_{\max}=F_{\max}=H'_{\max}.
\end{equation}
By the constructions of Definition \ref{definition:FpmI_forests} we have that the edges satisfy $$E((F^{-_r\lset{I}})^{+_r \lset{I}})=E(F^{-_r\lset{I}}) \cup E_2 = \left(E(F) \setminus E_1 \right) \cup E_2,$$ where \begin{align*}
E_1&=\{ \{r,\ell_F(w)\} \mid w\in V(\lset{I}) \setminus \{r\} \text{ satisfies (\ref{equation:C_condition})} \},\\    
E_2&=\{\{r,\rho_{F^{-_r\lset{I}}}(w)\} \mid w \in V(\lset{I})\text{ and }  \rho_{F^{-_r\lset{I}}}(w)\not = r\}.
\end{align*}

 From (\ref{equation:different_vertices_of_I}) we have that $\{w\in V(\lset{I}) \mid \rho_{F^{-_r\lset{I}}}(w)\neq r\} = \{u_1,\dots,u_m\}$, which are precisely the elements of $V(\lset{I})\setminus r$ satisfying (\ref{equation:C_condition}). Since $\ell_F(u_i)$ is a root of $F^{-_r\lset{I}}$, $\rho_{F^{-_r\lset{I}}}(u_i)=\ell_F(u_i)$. Thus $E_1=E_2$, $E((F^{-_r\lset{I}})^{+_r \lset{I}})=E(F)$, and $(F^{-_r\lset{I}})^{+_r \lset{I}}=F$, as claimed. 

On the other hand, \begin{align*}
E((F^{+_r \lset{I}})^{-_r\lset{I}}) =& E(F^{+_r \lset{I}}) \setminus E_2'  =  \left(E(F) \cup E_1'\right)\setminus E_2',
    \end{align*} where \begin{align*}
        E_1'&=\{\{ r ,\rho_F(w)\} \mid w \in V(\lset{I})\text{ and }  \rho_F(w)\not = r \},\\ E_2'&=\{\{ r,\ell_{F^{+_r \lset{I}}}(w)\}\mid w \in V(\lset{I}) \text{ satisfies  \hyperref[equation:C_condition]{($C_{H'{+\lset{I}},\lset{I},r}$)}}\}.
    \end{align*}

If $w\in V(\lset{I})$ and $\rho_F(w)\neq r$, then $\ell_{F^{+_r \lset{I}}}(w)=\rho_F(w)$ and $F^{+_r \lset{I}}_{\leq \ell_{F^{+_r \lset{I}}}(w)}=F_{\leq \rho_F(w)}$. In particular, the only $\lset{J}\in E_r(H'{+\lset{I}})$ satisfying $V(\lset{J})\cap F^{+_r \lset{I}}_{\leq \ell_{F^{+_r \lset{I}}}(w)}\neq \emptyset$ is $\lset{J}=\lset{I}$, since $F_{\leq \rho_F(w)}\in F_{\max}$. This means that $w$ satisfies \hyperref[equation:C_condition]{($C_{H'{+\lset{I}},\lset{I},r}$)} and thus $E_1'\subseteq E_2'$. 

Conversely, if $w\in V(\lset{I})$ satisfies  \hyperref[equation:C_condition]{($C_{H'{+\lset{I}},\lset{I},r}$)}, we have that   \eqref{equation:pm_forests_max} implies that $F^{+_r \lset{I}}_{\leq \ell_{F^{+_r \lset{I}}}(w)} \in ((F^{+_r \lset{I}})^{-_r\lset{I}})_{\max}=F_{\max}$, and thus $F^{+_r \lset{I}}_{\leq \ell_{F^{+_r \lset{I}}}(w)}=F_{\leq \rho_F(w)}$. Therefore, $E_2'\subseteq E_1'$, $E((F^{+_r \lset{I}})^{-_r \lset{I}})=E(F)$, and so $(F^{+_r \lset{I}})^{-_r \lset{I}}=F$  as we wanted to show.
\end{proof}

\begin{lemma}\label{lemma:fixed_point_free_involution}
The map $\psi$ in \eqref{equation:sign_reversing_involution_definition} is a  sign-reversing involution on $\Omega_H$.
\end{lemma}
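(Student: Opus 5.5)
The plan is to check three things directly: that $\psi$ is well defined, that $\psi\circ\psi=\mathrm{id}_{\Omega_H}$, and that $\omega(\psi(x))=-\omega(x)$. Throughout, the hyperedges $\lset{I}_v\in H_v$ are fixed once and for all, one for each $v\in V$. This is possible by Remark \ref{remark:Hv_non_empty}, because $H$ is connected and $|V|>1$. The key point is that $\lset{I}_r$ depends only on the root $r$, and $\psi$ does not change $r$.

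\emph{Well-definedness.} Let $(H',F,r)\in\Omega_H$. By Lemma \ref{lemma: F_pm_I_well_defined}(1)--(2), $F^{\pm_r\lset{I}_r}$ is an $(H'{\pm\lset{I}_r})$-forest, and $H'{\pm\lset{I}_r}\le H$. It remains to check that $r$ is still a root of $F^{\pm_r\lset{I}_r}$. This holds because both constructions in Definition \ref{definition:FpmI_forests} only add or remove edges from $r$ to its children; they never attach $r$ below another vertex. Hence $\psi(H',F,r)\in\Omega_H$.

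\emph{Involution.} If $\lset{I}_r\in H'$, then $\lset{I}_r\notin H'{-\lset{I}_r}$. So $\psi$ applied to $(H'{-\lset{I}_r},F^{-_r\lset{I}_r},r)$ uses the adding case with the same hyperedge $\lset{I}_r$, since the root is still $r$. It returns $(H',(F^{-_r\lset{I}_r})^{+_r\lset{I}_r},r)=(H',F,r)$ by Lemma \ref{lemma: F_pm_I_well_defined}(3). The case $\lset{I}_r\notin H'$ is symmetric.

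\emph{Sign reversal.} The factor $(-1)^{|H'|}$ changes sign because $|H'{\pm\lset{I}_r}|=|H'|\pm1$. So the main step is to show
$$\des(F^{+_r\lset{I}_r})+\sigma_{F^{+_r\lset{I}_r}}(r)=\des(F)+\sigma_F(r);$$
the removal case then follows by applying this identity to $F^{-_r\lset{I}_r}$ and using Lemma \ref{lemma:F_pm_I_well_defined}(3).

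To prove the identity, let $R$ be the set of roots $\rho_F(u)\neq r$ for $u\in V(\lset{I}_r)$. Passing from $F$ to $F^{+_r\lset{I}_r}$ adds exactly the edges $\{r,s\}$ with $s\in R$, where $r$ is the parent of $s$; all other edges are unchanged. Each new edge is a descent exactly when $s<r$, with the descent convention oriented from parent to child. So $\des$ increases by $|\{s\in R: s<r\}|$.

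On the other side, the roots of $F^{+_r\lset{I}_r}$ are the roots of $F$ minus $R$. Removing roots larger than $r$ does not change the rank of $r$, while each removed root smaller than $r$ lowers $\sigma(r)$ by one. Hence $\sigma$ decreases by exactly $|\{s\in R:s<r\}|$, and the two changes cancel.

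I expect the only delicate point to be this bookkeeping: fixing the orientation convention for descents consistently with Proposition \ref{proposition:h_vector_H_forests}, so that the gain in descents exactly matches the loss in rank of $r$. Everything else reduces to Lemma \ref{lemma: F_pm_I_well_defined}.
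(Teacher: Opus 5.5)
Your proposal is correct and follows essentially the same route as the paper: well-definedness and $\psi\circ\psi=\mathrm{id}$ come directly from Lemma~\ref{lemma: F_pm_I_well_defined}, and sign reversal comes from showing that $\des$ and $\sigma_F(r)$ shift by the same amount, with descents read as parent $>$ child, which is the convention the paper's computation implicitly uses. Your small variations---deducing the removal case from the adding case via part (3), and counting the distinct absorbed roots $R$ rather than vertices of $V(\lset{I}_r)$---are a bit more careful than the paper's count $N$, which overcounts when several vertices of $\lset{I}_r$ lie in the same tree.
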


\begin{proof} Lemma \ref{lemma: F_pm_I_well_defined} (1) and (2) show that $\psi(H',F,r)\in \Omega_H$, for all $(H',F,r)\in \Omega_H$, so $\psi$ is well-defined. The fact that $\psi\circ\psi=\text{id}_{\Omega_H}$ is a direct consequence of Lemma \ref{lemma: F_pm_I_well_defined}(3). Let us write $\psi(H',F,r)=(H'',F',r)$.

To prove that $\psi$ is sign reversing, we need to show that $$ \omega(H',F,r)=(-1)^{|H'|}t^{\des (F)+\sigma_F(r)}=-(-1)^{|H''|}t^{\des (F')+\sigma_{F'}(r)}=-\omega(H'',F',r).$$ By construction $|H'| = |H''| \pm 1$, so $(-1)^{|H'|}=-(-1)^{|H''|}$. Thus, we are left to prove that \begin{equation}\label{equation:descents_plus_sigma}
\des (F) + \sigma_F (r) = \des (F')+ \sigma_{F'} (r).\end{equation}  Let $r_0<\cdots<r_{\kappa(H')-1}$ be the roots of $F$ ordered according to the order in $V$.

We first consider the case $\lset{I}_r\in H'$. Let us write $V(\lset{I}_r)=\{ r,u_1,\dots,u_m,v_1,\dots,v_p\}$ as in (\ref{equation:different_vertices_of_I}), where the $u_i$ satisfy (\ref{equation:C_condition}) and the remaining $v_j$ do not. Recall that $F^{-_r\lset{I}_r}$ has roots $r_0,\dots,r_{\kappa(H')-1}, \ell_F(u_1),\dots,\ell_F(u_m)$. By construction, $\des (F^{-_r\lset{I}_r})=\des (F)-N$, where $N=\#\{i \mid \ell_F(u_i)< r \}$. Moreover, $\sigma_{F^{-_r\lset{I}_r}}(r)=\sigma_F(r)+N$, since $r$ is moved $N$ positions to the right in the  order of the roots of $F^{-_r\lset{I}_r}$. Therefore, $$\des (F) + \sigma_F (r)=(\des (F^{-_r\lset{I}_r})+N)+\sigma_F(r)=\des (F^{-_r\lset{I}_r})+\sigma_{F^{-_r\lset{I}_r}}(r).$$ 

Now assume that $\lset{I}_r\not\in H'$. Note that $F^{+_r \lset{I}_r}$ has roots $\{r_0,\dots, r_{\kappa(H')-1}\}\setminus\{\rho_F(w) \mid w\in V(\lset{I}_r), w\neq r \}$. In this case $\des (F^{+_r \lset{I}_r})=\des (F)+N$ and $\sigma_{F^{+_r \lset{I}_r}}(r)=\sigma_F(r)-N$, where now $N=\#\{w\in V(\lset{I}_r) \mid \rho_F(w)<r\}$. Thus, (\ref{equation:descents_plus_sigma}) holds in this case as well. The proof is complete.
\end{proof}

\section{Proofs of Theorems \ref{theorem:h_polynomial_as_mobius_values} and \ref{Theorem:second_recurrence_h}}\label{section:proofs_of_main_results}

These results will follow from the next analogue of Theorem \ref{Theorem:second_recurrence_h}, where $h$-polynomials are indexed by hypergraphs and computed recursively.

\begin{proposition}	\label{proposition:second_recurrence_h_hypergraphs} The $h$-polynomials associated to hypergraphs are determined by the following recurrence relations:
	\begin{enumerate}
		\item If $H=\emptyset$, $h_{H}(t)=1$. 

        \item If $H$ has connected components $H_1,\dots,H_k$, then
		$$h_H(t)=h_{H_1}(t)\cdots h_{H_k}(t).$$

        \item If $H$ is a connected hypergraph with $|V(H)|> 1$, then $h_{H}(t)$ satisfies the two recurrences:
    \begin{align}\label{equation:hrecursion1_hypergraphs}
    h_H(t) = \sum_{H'< H} (-1)^{|H|-|H'|-1}[\kappa(H')]_t h_{H'}(t),
\end{align}
and
\begin{align}\label{equation:hrecursion2_hypergraphs}
    h_{H}(t) = \sum_{\emptyset\neq H' \le H}(-1)^ {|H'|-1} h_{H/_{H'}}(t)\prod_{I\in H'_{\max}} [\,|I|\,]_t.
\end{align}
	\end{enumerate}
\end{proposition}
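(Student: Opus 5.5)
Parts (1) and (2) are immediate. For (1), the empty hypergraph on $V$ has no hyperedges, so the only admissible $V$ is $\emptyset$, or more generally each vertex forms its own component and the unique $H$-forest is the forest of isolated roots, which gives $h_H(t)=1$ by Proposition \ref{proposition:h_vector_H_forests}. Read with $|V|=1$ in mind, this is the singleton case. For (2), Lemma \ref{lemma:Btrees_struts}(1) shows that an $H$-forest is exactly a disjoint union of $H_i$-trees, one for each component. Descents add, so the descent generating function factors.

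The substantive content is (3). For \eqref{equation:hrecursion1_hypergraphs}, start from Proposition \ref{proposition:main_argument}. Isolate the term $H'=H$, where $\kappa(H)=1$ and $[1]_t=1$, to get
$(-1)^{|H|}h_H(t)=-\sum_{H'<H}[\kappa(H')]_t(-1)^{|H'|}h_{H'}(t)$.
Multiplying by $(-1)^{|H|}$ gives the stated sign $(-1)^{|H|-|H'|-1}$. Every $H'<H$ has strictly fewer hyperedges, so together with (1) and (2) this determines $h_H$ recursively.

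For \eqref{equation:hrecursion2_hypergraphs}, I would not attempt a second involution. Instead I would first prove Theorem \ref{theorem:h_polynomial_as_mobius_values}, namely $h_H=(-1)^{|H|}M_H$, by induction on $|H|$ (or on $|V|+|H|$). Both sides equal $1$ for $H=\emptyset$, and both are multiplicative over components, by (2) here and by Proposition \ref{proposition:recurrenceM}(2). For connected $H$ with $|V|>1$, the two families satisfy the same linear relation with leading coefficient $1$ on the $H'=H$ term: $h$ satisfies \eqref{equation:subgraphs}, and $M$ satisfies \eqref{equation:M_lemma_1}. Since every other term involves a proper $H'<H$, the induction hypothesis gives $h_H=(-1)^{|H|}M_H$. (Disconnected $H'$ reduce to components with fewer hyperedges, so the induction is well founded.)

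Then substitute $M=(-1)^{|\cdot|}h$ into \eqref{equation:M_lemma_2}, using $|H/_{H'}|=|H|-|H'|$. This gives
$\sum_{H'\le H}(-1)^{|H|-|H'|}h_{H/_{H'}}(t)\prod_{I\in H'_{\max}}[|I|]_t=0$.
The $H'=\emptyset$ term has every block a singleton, so the product is $1$, and $H/_{\emptyset}\cong H$ as a hypergraph on singletons, which gives $(-1)^{|H|}h_H$. Solving for $h_H$ yields \eqref{equation:hrecursion2_hypergraphs}. Here each $H/_{H'}$ with $H'\ne\emptyset$ has fewer hyperedges, so this is again a genuine recursion.

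The main obstacle is bookkeeping in the contraction step. The contraction $H/_{H'}$ may contain singleton hyperedges and repeated hyperedges. I need $h_{H/_{H'}}$ to be well defined via $\B(H/_{H'})$, and the identity $h=(-1)^{|\cdot|}M$ to hold for such general hypergraphs. Singleton hyperedges do not affect connectivity, but they do change $|H|$ and the poset $\WH$. So I must check that the inductive proof above used nothing beyond multiplicativity and Proposition \ref{proposition:main_argument}, both of which were stated for arbitrary multi-hypergraphs.
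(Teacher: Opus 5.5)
Your proposal is correct and is essentially the paper's proof. You obtain \eqref{equation:hrecursion1_hypergraphs} by isolating $H'=H$ in Proposition~\ref{proposition:main_argument}, then $(-1)^{|H|}h_H=M_H$ by matching recurrences (your induction spells out the paper's ``by uniqueness''), then \eqref{equation:hrecursion2_hypergraphs} by isolating $H'=\emptyset$ in \eqref{equation:M_lemma_2}.

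On the obstacle you flag, one case is reached neither by your induction nor by the paper's uniqueness argument: a single vertex carrying $k\ge 1$ singleton hyperedges. This does arise as a component of contractions and of subhypergraphs. It is settled directly, since there $h_H=1$ while $[\pmb{\hat{0}},H^0]$ is a Boolean lattice of rank $k$, so $M_H=(-1)^k$.
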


\begin{proof} Formulas (1) and (2) are valid thanks to Proposition \ref{proposition:recurrence_postnikov}, and   \eqref{equation:hrecursion1_hypergraphs} follows from Proposition \ref{proposition:main_argument} by solving for $h_H(t)$. Therefore, the family $\{h_H\}$ of $h$-polynomials of hypergraphs is completely determined by (1), (2) and \eqref{equation:hrecursion1_hypergraphs}. But  Propositions \ref{proposition:recurrenceM} shows that the same recurrences are satisfied by the family of  auxiliary polynomials $\{M_H\}$. By uniqueness, we see that \begin{equation}\label{equation:h_equals_M_up_to_sign}
(-1)^{|H|} h_H(t) = M_H(t),\qquad \text{for all hypergraphs } H.\end{equation}  Finally, \eqref{equation:hrecursion2_hypergraphs} follows from \eqref{equation:M_lemma_2} by using \eqref{equation:h_equals_M_up_to_sign} and solving for $h_H$ which corresponds to $H'=\emptyset$. Note that here we used that $|H/_{H'}|=|H|-|H'|$ and that $H/_{\emptyset}$ is isomorphic to $H$.
\end{proof}

The previous proof includes the proof of Theorem \ref{theorem:h_polynomial_as_mobius_values}, which is exactly \eqref{equation:h_equals_M_up_to_sign}. Moreover, Theorem \ref{Theorem:second_recurrence_h} is a particular case of Proposition \ref{proposition:second_recurrence_h_hypergraphs}. Indeed, if $\B$ is a connected building set, $E(\B)$ is a connected hypergraph. Now, if $H'\leq E(\B)$, then $H'$ is of the form $H'=E(\A)$, for a unique spanning sub-building set $\A\leq \B$. Moreover, $H'$ and $\A$ have the same number of connected components. Therefore, \eqref{equation:hrecursion1_hypergraphs} gives \eqref{equation:hrecursion1} when applied to $H=E(\B)$ since $h_H(t)=h_{E(\B)}(t)=h_\B(t)$. In the same way, \eqref{equation:hrecursion2_hypergraphs} gives  \eqref{equation:hrecursion2} since by Definition \ref{definition:building_set_contraction} the struts of the building set $\B/_\A$ coincide with $E(E(\B)/_{E(\A)})$ for the contracted hypergraph $E(\B)/_{E(\A)}$. Thus $h_{\B/_\A}(t)=h_{E(\B/_\A)}(t)=h_{E(E(\B)/_{E(\A)})}(t)=h_{E(\B)/_{E(\A)}}(t)$, giving the required recursion.

\begin{corollary} For any two hypergraphs $H,H'$ on $V$ with  $\B(H)=\B(H')$, we have that
$$(-1)^{|H|} M_{H}(t) = (-1)^{|H'|} M_{H'}(t).$$
In particular, 
$$h_H(t)=(-1)^{|H|} M_{H}(t) = (-1)^{|E(H)|} M_{E(H)}(t)= (-1)^{|\B(H)|} M_{\B(H)}(t).$$ 
\end{corollary}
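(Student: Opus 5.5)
The corollary follows directly from the identity \eqref{equation:h_equals_M_up_to_sign} established in the proof of Proposition \ref{proposition:second_recurrence_h_hypergraphs}. That identity is exactly Theorem \ref{theorem:h_polynomial_as_mobius_values}, and it holds for every hypergraph. So the plan is to apply it to each hypergraph in turn and then use the fact that $h$-polynomials depend only on the generated building set.

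The steps are as follows.

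First, apply \eqref{equation:h_equals_M_up_to_sign} to $H$ and to $H'$. This gives
\[
(-1)^{|H|}M_H(t)=h_H(t), \qquad (-1)^{|H'|}M_{H'}(t)=h_{H'}(t).
\]

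Second, recall that by definition $h_H(t)=h_{\B(H)}(t)$ and $h_{H'}(t)=h_{\B(H')}(t)$. Since $\B(H)=\B(H')$ by hypothesis, the two right-hand sides agree, which proves the first displayed equality.

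Third, for the chain of equalities, apply the first part to the pairs $(H,E(H))$ and $(H,\B(H))$. Here $\B(H)$ is regarded as a hypergraph, as the paper allows. For the first pair we use $\B(E(H))=\B(H)$, which is stated in the introduction. For the second pair we use $\B(\B(H))=\B(H)$, which holds because a building set $\B$ satisfies $\B(\B)=\B$ (also noted in the introduction). Combining these with $h_H(t)=(-1)^{|H|}M_H(t)$ yields the full chain.

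There is no substantive obstacle. The only point needing care is that $M_{\B(H)}$ is defined by viewing $\B(H)$ as a hypergraph without repeated hyperedges. The identity $\B(\B)=\B$ should then be checked briefly: a set $I$ is connected by $\B|_I$ if and only if $I\in\B$. This uses \ref{B1} for the forward direction, since any set connected by blocks of $\B$ is a union of a connected family of blocks and hence a block. The converse is immediate, because $I$ is itself a hyperedge of $\B|_I$.
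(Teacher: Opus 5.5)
Your proposal is correct and follows the paper's route. The paper states the corollary without a separate proof, reading it off directly from the identity \eqref{equation:h_equals_M_up_to_sign}, $(-1)^{|H|}h_H(t)=M_H(t)$ for every hypergraph, combined with $h_H=h_{\B(H)}$, $\B(E(H))=\B(H)$ and $\B(\B)=\B$; your short check of $\B(\B)=\B$ via \ref{B1} supplies a detail the paper only asserts in the introduction.
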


\section{Examples}\label{section:examples}
We illustrate the use of Theorem \ref{Theorem:second_recurrence_h} with two examples. 
The strength of the new recursion can be appreciated in the cases where $|E(\B)|<|V(\B)|$. 

\begin{example} Consider the building set 
\begin{equation*}
    \B_n:=\{\{i\}\mid i \in [n]\}\cup \{[n]\},
\end{equation*} with ground set $[n]$ and only strut $[n]$, so $P_{\B_n}$ is an $(n-1)$-simplex. If $\A<\B_n$, then $E(\A)=\emptyset$ and $|\kappa(A)|=n$. Therefore, we obtain directly from recurrence \eqref{equation:hrecursion1} that $$h_{\B_n}(t)=(-1)^{1-0-1}[n]_t=[n]_t,$$ which gives a short proof of this well-known result, see \cite[Example 6.11]{PostnikovReinerWilliams2007}.
\end{example}

\begin{example} Fix integers $0\leq k\leq n$ and consider the building set $\mathcal{T}_{n,k}$ with ground set $[n+k]$ and struts $\{l,l+n\}, l=1,\dots,k$ and $[n]$, see Figure \ref{figure:turtles_buildingsets} for some examples.  Note that if $k=0$ we simply have $\mathcal{T}_{n,0}=\B_n$.

\begin{figure}[ht]
    \centering
    \captionsetup{justification=centering}
    \resizebox{1\linewidth}{!}{\definecolor{color_0}{HTML}{FFFFFF}
\definecolor{color_1}{HTML}{2A2A2A}
\definecolor{color_2}{HTML}{000000}

\begin{tikzpicture}[ main_node_g/.style={circle,minimum size=0.5em,inner sep=2pt, draw=black, fill = white}]

\node at (-0.25,2.25) {$\mathcal{T}_{4,2}$};
\node at (4,2.25) {$\mathcal{T}_{7,7}$};
\node at (8,2.25) {$\mathcal{T}_{5,4}$};

\node[main_node_g, fill = color_0,text = color_1,minimum size=0.535em] (0) at (0.60,-0.30) {\tiny \textbf{1}};
\node[main_node_g, fill = color_0,text = color_1,minimum size=0.535em] (1) at (-0.30,0.60) {\tiny \textbf{2}};
\node[main_node_g, fill = color_0,text = color_1,minimum size=0.535em] (2) at (-1.20,-0.30) {\tiny \textbf{3}};
\node[main_node_g, fill = color_0,text = color_1,minimum size=0.535em] (3) at (-0.30,-1.20) {\tiny \textbf{4}};
\node[main_node_g, fill = color_0,text = color_1,minimum size=0.535em] (4) at (1.50,-0.30) {\tiny \textbf{5}};
\node[main_node_g, fill = color_0,text = color_1,minimum size=0.535em] (5) at (-0.30,1.50) {\tiny \textbf{6}};

\begin{pgfonlayer}{background}
\fill[fill = color_2, opacity = 0.3, line width = 0.1mm, draw = color_2, solid]
(-0.47,0.78)--(-1.38,-0.13)--(-1.39,-0.14)--(-1.40,-0.16)--(-1.41,-0.17)--(-1.41,-0.18)--(-1.42,-0.20)--(-1.43,-0.21)--(-1.43,-0.22)--(-1.44,-0.24)--(-1.44,-0.25)--(-1.44,-0.27)--(-1.44,-0.28)--(-1.44,-0.30)--(-1.44,-0.32)--(-1.44,-0.33)--(-1.44,-0.35)--(-1.44,-0.36)--(-1.43,-0.38)--(-1.43,-0.39)--(-1.42,-0.40)--(-1.41,-0.42)--(-1.41,-0.43)--(-1.40,-0.44)--(-1.39,-0.46)--(-1.38,-0.47)--(-0.47,-1.38)--(-0.46,-1.39)--(-0.44,-1.40)--(-0.43,-1.41)--(-0.42,-1.41)--(-0.40,-1.42)--(-0.39,-1.43)--(-0.38,-1.43)--(-0.36,-1.44)--(-0.35,-1.44)--(-0.33,-1.44)--(-0.32,-1.44)--(-0.30,-1.44)--(-0.28,-1.44)--(-0.27,-1.44)--(-0.25,-1.44)--(-0.24,-1.44)--(-0.22,-1.43)--(-0.21,-1.43)--(-0.20,-1.42)--(-0.18,-1.41)--(-0.17,-1.41)--(-0.16,-1.40)--(-0.14,-1.39)--(-0.13,-1.38)--(0.78,-0.47)--(0.79,-0.46)--(0.80,-0.44)--(0.81,-0.43)--(0.81,-0.42)--(0.82,-0.40)--(0.83,-0.39)--(0.83,-0.38)--(0.84,-0.36)--(0.84,-0.35)--(0.84,-0.33)--(0.84,-0.32)--(0.84,-0.30)--(0.84,-0.28)--(0.84,-0.27)--(0.84,-0.25)--(0.84,-0.24)--(0.83,-0.22)--(0.83,-0.21)--(0.82,-0.20)--(0.81,-0.18)--(0.81,-0.17)--(0.80,-0.16)--(0.79,-0.14)--(0.78,-0.13)--(-0.13,0.78)--(-0.14,0.79)--(-0.16,0.80)--(-0.17,0.81)--(-0.18,0.81)--(-0.20,0.82)--(-0.21,0.83)--(-0.22,0.83)--(-0.24,0.84)--(-0.25,0.84)--(-0.27,0.84)--(-0.28,0.84)--(-0.30,0.84)--(-0.32,0.84)--(-0.33,0.84)--(-0.35,0.84)--(-0.36,0.84)--(-0.38,0.83)--(-0.39,0.83)--(-0.40,0.82)--(-0.42,0.81)--(-0.43,0.81)--(-0.44,0.80)--(-0.46,0.79)--cycle;
\end{pgfonlayer}

\path[draw  = color_2 ,thick, solid](0) edge node {} (4);
\path[draw  = color_2 ,thick, solid](1) edge node {} (5);

\node[main_node_g, fill = color_0,text = color_1,minimum size=0.535em] (0) at (4.75,0.00) {\tiny \textbf{1}};
\node[main_node_g, fill = color_0,text = color_1,minimum size=0.535em] (1) at (4.47,0.59) {\tiny \textbf{2}};
\node[main_node_g, fill = color_0,text = color_1,minimum size=0.535em] (2) at (3.83,0.73) {\tiny \textbf{3}};
\node[main_node_g, fill = color_0,text = color_1,minimum size=0.535em] (3) at (3.32,0.33) {\tiny \textbf{4}};
\node[main_node_g, fill = color_0,text = color_1,minimum size=0.535em] (4) at (3.32,-0.33) {\tiny \textbf{5}};
\node[main_node_g, fill = color_0,text = color_1,minimum size=0.535em] (5) at (3.83,-0.73) {\tiny \textbf{6}};
\node[main_node_g, fill = color_0,text = color_1,minimum size=0.535em] (6) at (4.47,-0.59) {\tiny \textbf{7}};
\node[main_node_g, fill = color_0,text = color_1,minimum size=0.535em] (7) at (5.50,0.00) {\tiny \textbf{8}};
\node[main_node_g, fill = color_0,text = color_1,minimum size=0.535em] (8) at (4.94,1.17) {\tiny \textbf{9}};
\node[main_node_g, fill = color_0,text = color_1,minimum size=0.535em] (9) at (3.67,1.46) {\tiny \textbf{10}};
\node[main_node_g, fill = color_0,text = color_1,minimum size=0.535em] (10) at (2.65,0.65) {\tiny \textbf{11}};
\node[main_node_g, fill = color_0,text = color_1,minimum size=0.535em] (11) at (2.65,-0.65) {\tiny \textbf{12}};
\node[main_node_g, fill = color_0,text = color_1,minimum size=0.535em] (12) at (3.67,-1.46) {\tiny \textbf{13}};
\node[main_node_g, fill = color_0,text = color_1,minimum size=0.535em] (13) at (4.94,-1.17) {\tiny \textbf{14}};

\begin{pgfonlayer}{background}
\fill[fill = color_2, opacity = 0.3, line width = 0.1mm, draw = color_2, solid]
(3.08,0.33)--(3.08,-0.33)--(3.08,-0.34)--(3.08,-0.36)--(3.08,-0.37)--(3.09,-0.39)--(3.09,-0.40)--(3.10,-0.42)--(3.10,-0.43)--(3.11,-0.44)--(3.12,-0.46)--(3.13,-0.47)--(3.14,-0.48)--(3.15,-0.49)--(3.16,-0.50)--(3.17,-0.51)--(3.68,-0.92)--(3.69,-0.93)--(3.70,-0.94)--(3.72,-0.95)--(3.73,-0.95)--(3.74,-0.96)--(3.76,-0.96)--(3.77,-0.97)--(3.79,-0.97)--(3.80,-0.97)--(3.82,-0.98)--(3.83,-0.98)--(3.85,-0.98)--(3.86,-0.97)--(3.88,-0.97)--(3.89,-0.97)--(4.53,-0.82)--(4.54,-0.82)--(4.56,-0.81)--(4.57,-0.81)--(4.59,-0.80)--(4.60,-0.79)--(4.61,-0.78)--(4.62,-0.77)--(4.64,-0.76)--(4.65,-0.75)--(4.66,-0.74)--(4.67,-0.73)--(4.67,-0.72)--(4.68,-0.70)--(4.69,-0.69)--(4.97,-0.10)--(4.98,-0.09)--(4.98,-0.08)--(4.99,-0.06)--(4.99,-0.05)--(4.99,-0.03)--(4.99,-0.02)--(4.99,0.00)--(4.99,0.02)--(4.99,0.03)--(4.99,0.05)--(4.99,0.06)--(4.98,0.08)--(4.98,0.09)--(4.97,0.10)--(4.69,0.69)--(4.68,0.70)--(4.67,0.72)--(4.67,0.73)--(4.66,0.74)--(4.65,0.75)--(4.64,0.76)--(4.62,0.77)--(4.61,0.78)--(4.60,0.79)--(4.59,0.80)--(4.57,0.81)--(4.56,0.81)--(4.54,0.82)--(4.53,0.82)--(3.89,0.97)--(3.88,0.97)--(3.86,0.97)--(3.85,0.98)--(3.83,0.98)--(3.82,0.98)--(3.80,0.97)--(3.79,0.97)--(3.77,0.97)--(3.76,0.96)--(3.74,0.96)--(3.73,0.95)--(3.72,0.95)--(3.70,0.94)--(3.69,0.93)--(3.68,0.92)--(3.17,0.51)--(3.16,0.50)--(3.15,0.49)--(3.14,0.48)--(3.13,0.47)--(3.12,0.46)--(3.11,0.44)--(3.10,0.43)--(3.10,0.42)--(3.09,0.40)--(3.09,0.39)--(3.08,0.37)--(3.08,0.36)--(3.08,0.34)--cycle;
\end{pgfonlayer}

\path[draw  = color_2 ,thick, solid](13) edge node {} (6);

\path[draw  = color_2 ,thick, solid](12) edge node {} (5);

\path[draw  = color_2 ,thick, solid](7) edge node {} (0);

\path[draw  = color_2 ,thick, solid](10) edge node {} (3);

\path[draw  = color_2 ,thick, solid](11) edge node {} (4);

\path[draw  = color_2 ,thick, solid](8) edge node {} (1);

\path[draw  = color_2 ,thick, solid](9) edge node {} (2);

\node[main_node_g, fill = color_0,text = color_1,minimum size=0.535em] (0) at (8.77,-0.15) {\tiny \textbf{1}};
\node[main_node_g, fill = color_0,text = color_1,minimum size=0.535em] (1) at (8.27,0.54) {\tiny \textbf{2}};
\node[main_node_g, fill = color_0,text = color_1,minimum size=0.535em] (2) at (7.46,0.27) {\tiny \textbf{3}};
\node[main_node_g, fill = color_0,text = color_1,minimum size=0.535em] (3) at (7.46,-0.58) {\tiny \textbf{4}};
\node[main_node_g, fill = color_0,text = color_1,minimum size=0.535em] (4) at (8.27,-0.84) {\tiny \textbf{5}};
\node[main_node_g, fill = color_0,text = color_1,minimum size=0.535em] (5) at (9.50,-0.15) {\tiny \textbf{6}};
\node[main_node_g, fill = color_0,text = color_1,minimum size=0.535em] (6) at (8.50,1.23) {\tiny \textbf{7}};
\node[main_node_g, fill = color_0,text = color_1,minimum size=0.535em] (7) at (6.88,0.70) {\tiny \textbf{8}};
\node[main_node_g, fill = color_0,text = color_1,minimum size=0.535em] (8) at (6.88,-1.01) {\tiny \textbf{9}};

\begin{pgfonlayer}{background}
\fill[fill = color_2, opacity = 0.3, line width = 0.1mm, draw = color_2, solid]
(7.22,0.27)--(7.22,-0.58)--(7.22,-0.59)--(7.22,-0.61)--(7.22,-0.63)--(7.23,-0.64)--(7.23,-0.66)--(7.24,-0.67)--(7.24,-0.68)--(7.25,-0.70)--(7.26,-0.71)--(7.27,-0.72)--(7.27,-0.74)--(7.28,-0.75)--(7.30,-0.76)--(7.31,-0.77)--(7.32,-0.78)--(7.33,-0.79)--(7.35,-0.79)--(7.36,-0.80)--(7.37,-0.81)--(7.39,-0.81)--(8.20,-1.08)--(8.21,-1.08)--(8.23,-1.08)--(8.24,-1.09)--(8.26,-1.09)--(8.27,-1.09)--(8.29,-1.09)--(8.30,-1.09)--(8.32,-1.08)--(8.33,-1.08)--(8.35,-1.08)--(8.36,-1.07)--(8.38,-1.06)--(8.39,-1.06)--(8.41,-1.05)--(8.42,-1.04)--(8.43,-1.03)--(8.44,-1.02)--(8.45,-1.01)--(8.46,-1.00)--(8.47,-0.99)--(8.97,-0.30)--(8.98,-0.28)--(8.99,-0.27)--(9.00,-0.26)--(9.00,-0.24)--(9.01,-0.23)--(9.01,-0.21)--(9.02,-0.20)--(9.02,-0.18)--(9.02,-0.17)--(9.02,-0.15)--(9.02,-0.14)--(9.02,-0.12)--(9.02,-0.11)--(9.01,-0.09)--(9.01,-0.08)--(9.00,-0.06)--(9.00,-0.05)--(8.99,-0.04)--(8.98,-0.02)--(8.97,-0.01)--(8.47,0.68)--(8.46,0.69)--(8.45,0.70)--(8.44,0.71)--(8.43,0.72)--(8.42,0.73)--(8.41,0.74)--(8.39,0.75)--(8.38,0.76)--(8.36,0.76)--(8.35,0.77)--(8.33,0.77)--(8.32,0.78)--(8.30,0.78)--(8.29,0.78)--(8.27,0.78)--(8.26,0.78)--(8.24,0.78)--(8.23,0.78)--(8.21,0.77)--(8.20,0.77)--(7.39,0.51)--(7.37,0.50)--(7.36,0.49)--(7.35,0.49)--(7.33,0.48)--(7.32,0.47)--(7.31,0.46)--(7.30,0.45)--(7.28,0.44)--(7.27,0.43)--(7.27,0.42)--(7.26,0.40)--(7.25,0.39)--(7.24,0.38)--(7.24,0.36)--(7.23,0.35)--(7.23,0.33)--(7.22,0.32)--(7.22,0.30)--(7.22,0.29)--cycle;
\end{pgfonlayer}

\path[draw  = color_2 ,thick, solid](7) edge node {} (2);

\path[draw  = color_2 ,thick, solid](8) edge node {} (3);

\path[draw  = color_2 ,thick, solid](1) edge node {} (6);

\path[draw  = color_2 ,thick, solid](0) edge node {} (5);

\end{tikzpicture}}
	\caption{The building sets  $\mathcal{T}_{4,2}$, $\mathcal{T}_{7,7}$, and  $\mathcal{T}_{5,4}$.}
    \label{figure:turtles_buildingsets}
\end{figure}

In order to compute $h_{\T_{n,k}}$ from  \eqref{equation:hrecursion1} we describe all $A\leq \T_{n,k}$. There are two possibilities. First, $[n]\in E(\A)$. In this case, having $\A$ means choosing $0\leq j \leq k$ lines to remove, thus $|E(\A)|= k - j + 1$,  $\kappa(\A) = j + 1$, and $h_\A=h_{\T_{n,k-j}}$. Second, if $[n]\not\in E(\A)$, removing $0\leq j \leq k$ lines we have that $|E(\A)|= k - j$,  $\kappa(\A) = (k - j) + 2j + (n - k) = n+j$, and $h_\A=h_{P_2}^{k-j}=(1+t)^{k-j}$. In conclusion, \eqref{equation:hrecursion1} leads to the equation $$\sum_{j=0}^k \binom{k}{j} (-1)^{k-j+1} {[j+1]_t} h_{\mathcal{T}_{n,k-j}}(t) + \sum_{j=0}^k \binom{k}{j} (-1)^{k-j}  {[n+j]_t} (1+t)^{k-j} = 0.$$ Let us write this as 
$$\sum_{j=0}^k \binom{k}{j}  {[j+1]_t} (-1)^{k-j}h_{\T_{n,k-j}}(t) = \sum_{j=0}^k \binom{k}{j} {[n+j]_t} {(-1)^{k-j}(1+t)^{k-j}},\, 0\leq k\leq n.$$ Setting $T(t,z)=\sum_{j=0}^n (-1)^j h_{\T_{n,j}}(t)\frac{z^j}{j!}$ and using the exponential generating series $$\sum_{j=0}^\infty \frac{[j+n]_t}{j!}z^j=\frac{e^z-t^ne^{tz}}{1-t},\qquad  \sum_{j=0}^\infty \frac{(-1)^j(1+t)^j}{j!}z^j=e^{-(1+t)z},$$ the previous equation translates into $$\frac{e^z-te^{tz}}{1-t}T(t,z) = \frac{e^z-t^ne^{tz}}{1-t}e^{-(1+t)z},\quad (\text{mod } z^{n+1}).$$

This means that $$T(t,z) = \frac{e^z-t^ne^{tz}}{e^z-te^{tz}}e^{-(1+t)z} = \frac{1-t^ne^{(t-1)z}}{1-te^{(t-1)z}}e^{-(1+t)z} \quad (\text{mod } z^{n+1}).$$ By writing \begin{align*}
\frac{t^ne^{(t-1)z}-1}{te^{(t-1)z}-1} = t^{n-1} + \frac{t^{n-1}-1}{te^{(t-1)z}-1} = t^{n-1} + [n-1]_{t}\frac{t-1}{t-e^{-(t-1)z}}e^{-(t-1)z},
\end{align*} we find that $$T(t,z) = t^{n-1}e^{-(1+t)z} + [n-1]_{t}\frac{t-1}{t-e^{-(t-1)z}}e^{-2tz} \quad (\text{mod } z^{n+1}).$$ Recalling the exponential generating series of the Eulerian polynomials, see e.g. \cite[Theorem 1.6]{Petersen2015}, $$\sum_{m=0}^\infty A_m(t)\frac{z^m}{m!}=\frac{t-1}{t-e^{(t-1)z}},$$ we finally find that $$ h_{\T_{n,j}}(t) =
(1+t)^jt^{n-1}+[n-1]_t\sum_{l=0}^j \binom{j}{l} (2t)^{j-l} A_{l}(t).$$

For instance, we have the values  $$h_{\T_{n,0}}(t)=[n]_t,\qquad h_{\T_{n,1}}(t)=(2t+1)[n]_t-t^n = 1+3(t+\cdots+t^{n-1})+t^n,$$ where the last equality holds for $n\geq 2$.

\end{example}

\section*{Acknowledgements}
The authors would like to thank Luis Alfonso Contreras Ordo\~nez, F\'elix G\'elinas, Mar\'ia Ronco, Yannic Vargas, and Michelle L. Wachs for helpful conversations on different aspects of  nestohedra that clarified the exposition of these results.

\printbibliography

\end{document}